\pgfplotsset{compat=1.9}
\renewcommand\epsilon{\varepsilon}
\renewcommand\emptyset{\varnothing}
\renewcommand\phi{\varphi}
\newcommand\N{\mathbb{N}}
\newcommand\R{\mathbb{R}}
\newtheorem{thm}{Theorem}[section]
\newtheorem*{thm*}{Theorem}
\newtheorem{lem}[thm]{Lemma}
\newtheorem{quest}{Question}
\theoremstyle{definition}
\newtheorem{rem}[thm]{Remark}
\title[On embeddings of finite subsets of $\ell_p$]{On embeddings of finite subsets of $\ell_p$}
\author{James Kilbane}
\address{Department of Pure Maths and Mathematical Statistics, University of Cambridge}
\email{jk511@cam.ac.uk}
\date{\today}
\begin{document}
\begin{abstract}
We study finite subsets of $\ell_p$ and show that, up to nowhere dense and Haar null complement, all of them embed isometrically into any Banach space that uniformly contains $\ell_p^n$.
\end{abstract}

\maketitle

\section{Introduction}

Our starting point is the following question due to Ostrovskii\cite{ostrov}:
\begin{quest}\label{quest:infdim}
Suppose $1 < p < \infty$, and that $X$ is a Banach space that contains an isomorphic copy of $\ell_p$. Then does any finite subset of $\ell_p$ embed isometrically into $X$?
\end{quest}

A consequence of Krivine's theorem, which we recall in Section 2, is that any Banach space containing $\ell_p$ isomorphically, contains the spaces $\ell_p^n$, $n \in \N$, almost isometrically. The above question is asking for a natural strengthening of this fact.

The following partial result for Question \ref{quest:infdim} in the case $p=2$ was proved by Shkarin in \cite{shkarin}:
\begin{thm}[Lemma 3 of \cite{shkarin}]\label{thm:l2}
Suppose $X$ is any infinite-dimensional Banach space and that $Z$ is any affinely independent subset of $\ell_2$. Then $Z$ embeds isometrically into $X$.
\end{thm}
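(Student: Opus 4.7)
The plan is to combine Dvoretzky's theorem, which supplies nearly isometric copies of $\ell_2^n$ inside $X$, with a continuity/degree argument that upgrades ``nearly'' to ``exactly'' on the finite set $Z$.

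Since $Z = \{z_0, \ldots, z_n\}$ is affinely independent, its affine span is an $n$-dimensional Euclidean space, so I may assume $Z \subset \ell_2^n$. Write $d_{ij} = \|z_i - z_j\|_2$ and collect these into the target vector $d \in \R^N$ with $N = \binom{n+1}{2}$. By Dvoretzky's theorem, for every $\epsilon > 0$ there exists a linear embedding $T : \ell_2^n \to X$ satisfying $(1-\epsilon)\|u\|_2 \le \|Tu\|_X \le (1+\epsilon)\|u\|_2$ for all $u \in \ell_2^n$. Consequently, the configuration $(Tz_0, \ldots, Tz_n)$ realises distances within $2\epsilon \cdot \mathrm{diam}(Z)$ of $d$, but typically not exactly $d$.

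To correct this, I would study the continuous distance map
\[\Phi_T : (\ell_2^n)^{n+1} \to \R^N, \qquad (y_0, \ldots, y_n) \mapsto (\|T y_i - T y_j\|_X)_{i<j},\]
viewed as a small uniform perturbation (on bounded sets) of the Euclidean distance map $\Phi_I(y) = (\|y_i - y_j\|_2)_{i<j}$. The map $\Phi_I$ is smooth off the diagonals, and at an affinely independent configuration $z$ it is an open map. Indeed, modulo the action of the isometry group of $\ell_2^n$ (dimension $n(n+1)/2$), the configuration space has dimension $n(n+1) - n(n+1)/2 = \binom{n+1}{2} = N$, matching the target dimension. A direct Jacobian computation, using affine independence to rule out degeneracies, shows that $\Phi_I$ restricted to a slice transverse to the orbit of $z$ is a local diffeomorphism, so $d$ lies in the interior of $\Phi_I(U)$ for any sufficiently small neighbourhood $U$ of $z$.

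Finally I would invoke a topological degree argument to transfer this openness to $\Phi_T$: for $\epsilon$ small enough, $\Phi_T$ is a $C^0$-small perturbation of $\Phi_I$ on $U$, so $d$ remains in its image. Any preimage $(y_0, \ldots, y_n)$ then yields points $(T y_0, \ldots, T y_n) \subset X$ with exact distance matrix $d$, giving the required isometric embedding of $Z$.

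The main obstacle is the openness step. One must verify carefully that the Jacobian of $\Phi_I$ on a transverse slice is non-singular precisely at affinely independent configurations, and then ensure that the degree-theoretic transfer to $\Phi_T$ goes through despite the $X$-norm being only continuous, not differentiable. Affine independence is essential here: without it the Jacobian on the slice degenerates and the local openness, and hence the whole perturbation argument, breaks down.
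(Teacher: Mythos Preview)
Your approach is essentially the paper's own: Dvoretzky to get a $(1+\epsilon)$-copy of $\ell_2^n$, local openness of the Euclidean distance map at affinely independent configurations, and a topological argument to pass from near-isometry to exact isometry on the finite set. The paper packages the last step as Brouwer's fixed point theorem (Lemma~\ref{lem:final}) rather than degree, but the two are interchangeable here since only continuity of the $X$-norm is needed.

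For the Jacobian step you correctly flag as the main obstacle, the paper proceeds concretely rather than via your dimension count. After translating so the set is linearly independent, apply Gram--Schmidt to obtain an isometric copy $(\Theta x_1,\dots,\Theta x_n)$ with $\Theta x_i \in \mathrm{span}\{e_1,\dots,e_i\}$ and nonzero $e_i$-coefficient; this upper-triangular form is precisely a transverse slice to the isometry orbit. A minor variant of Lemma~\ref{lem:tri} then shows, by an explicit triangular computation of the partial derivatives $\partial F/\partial e_l^k$, that the derivative has full rank at such points, so the tuple lies in $G_n$ and has Property~$K$. Your dimension count ($n(n+1)-\tfrac{n(n+1)}{2}=\binom{n+1}{2}$) is the right heuristic, but matching dimensions does not by itself exclude a degenerate Jacobian; the triangular computation (or an equivalent one) is still required to close the argument.
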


A different proof was given in \cite{me}, and the methods of both \cite{me} and \cite{shkarin} inspired the proofs in this article. We note that, by Dvoretzky's theorem, $\ell_2^n$ almost isometrically embeds into $X$ for any infinite-dimensional Banach space $X$. Thus, in the case $p=2$, Theorem \ref{thm:l2} provides a partial positive answer to the following variant of Question \ref{quest:infdim}:
\begin{quest}\label{quest:findim}
Suppose that $1 < p < \infty$ and that $X$ is a Banach space uniformly containing the spaces $\ell_p^n$, $n \in \N$. Then does any finite subset of $\ell_p$ embed isometrically into $X$?
\end{quest}
As before, the weaker conclusion that finite subsets of $\ell_p$ embed almost isometrically into such a space $X$ follows from Krivine's theorem, or more precisely, a finite quantitative version of it (see Theorem \ref{thm:krivine} below.)

There are natural analogues of Questions \ref{quest:findim} and \ref{quest:infdim} for $p= \infty$. Since any $n$-point metric space embeds isometrically into $\ell_\infty^n$, the conclusion in any such analogue is that $X$ contains isometrically all finite metric spaces. The assumption on $X$ is one of the following (in decreasing order of strength): $X$ contains an isomorphic copy of $\ell_\infty$; $X$ contains an isomorphic copy of $c_0$; $X$ contains the spaces $\ell_\infty^n$, $n \in \N$, uniformly. The answer for each of these questions is, however, negative. Indeed, let $X$ be a strictly convex renorming of $\ell_\infty$. Then subsets of $X$ have the unique metric midpoint property, ie, there is no collection of 4 distinct	 points $x,y,z,w \in X$ such that $d(x,z) = d(z,y) = d(x,w) = d(w,y) = \frac{1}{2} d(x,y)$. However, there are finite metric spaces with this property, and thus such a metric space does not embed isometrically into $X$. In \cite{me} we showed a positive result similar to Theorem \ref{thm:l2}. Let us call a metric space \emph{concave} if it contains no three distinct points $x,y,z$ such that $d(x,z) = d(x,y) + d(y,z)$. Then,

\begin{thm}[Theorem 4.3 of \cite{me}]\label{thm:linf}
Suppose that $X$ is some infinite-dimensional Banach space such that the spaces $\ell_\infty^n$, $n \in \N$, uniformly embed into $X$. Then if $Z$ is any finite concave metric space, $Z$ embeds isometrically into $X$.
\end{thm}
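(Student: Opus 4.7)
The plan is to combine the Fréchet embedding of $Z$ into $\ell_\infty^n$ with Krivine's theorem to obtain almost-isometric copies of $Z$ in $X$, and then to promote one of these to an exact isometric embedding using a perturbation and continuity argument, with concavity providing the crucial non-degeneracy.

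First, label $Z = \{z_1, \ldots, z_n\}$ and let $F : Z \to \ell_\infty^n$ be the Fréchet embedding $F(z_i) = (d(z_i, z_1), \ldots, d(z_i, z_n))$, which is an isometry. Concavity of $Z$ yields strict non-degeneracy: for each pair $(i,j)$ and each coordinate $k \notin \{i,j\}$, the reverse triangle inequality $|d(z_i, z_k) - d(z_j, z_k)| \le d(z_i, z_j)$ must be strict, since equality would force one of $z_i, z_j, z_k$ to lie on a geodesic between the other two. Consequently, there is a uniform gap $\delta > 0$ such that $\|F(z_i) - F(z_j)\|_\infty = d(z_i, z_j)$ is attained only at coordinates $i$ and $j$, and exceeded by at least $\delta$ at every other coordinate. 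By the finite quantitative version of Krivine's theorem, for every $\epsilon > 0$ and every $N$ there is a $(1+\epsilon)$-isometric linear embedding $T_\epsilon : \ell_\infty^N \hookrightarrow X$; composing with $F$ (padded by zeros) thus yields almost-isometric embeddings $Z \hookrightarrow X$.

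Second, to upgrade to an exact isometry, I would enlarge $N$ to $n + \binom{n}{2}$, reserving one auxiliary coordinate $k_{ij}$ per pair $i<j$, and consider a parametrized family $F_t : Z \to \ell_\infty^N$ for $t = (t_{ij}) \in \R^{\binom{n}{2}}$: shrink the first $n$ coordinates by the factor $1 - 2\epsilon$ (so that all subsequent perturbations adjust distances upward), and at coordinate $k_{ij}$ place $+t_{ij}/2$ at $z_i$, $-t_{ij}/2$ at $z_j$, and $0$ at every other $z_l$. For $t$ in a small box controlled by $\delta$, the $\ell_\infty^N$-norm $\|F_t(z_i) - F_t(z_j)\|$ is governed by the contracted Fréchet maximum and the auxiliary value $|t_{ij}|$. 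The continuous map $\Psi : t \mapsto (\|T_\epsilon F_t(z_i) - T_\epsilon F_t(z_j)\|)_{i<j}$ then satisfies $\Psi(0)_{ij} < d(z_i, z_j)$ for all pairs and grows past $d(z_i, z_j)$ in coordinate $t_{ij}$ as $|t_{ij}|$ is increased; a coordinate-wise intermediate value or Brouwer fixed point argument produces $t^*$ with $\Psi(t^*)_{ij} = d(z_i, z_j)$ for every pair, yielding the embedding $z_i \mapsto T_\epsilon F_{t^*}(z_i)$.

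The main obstacle is the interaction between different perturbations: a correction at coordinate $k_{ij}$ also contributes $|t_{ij}|/2$ to the pairwise distances of pairs $(i,l)$ and $(j,l)$ for $l \neq i,j$, so for $\Psi(t)_{ij}$ to depend essentially only on $t_{ij}$ one needs $d(z_i, z_j) \lesssim 2 d(z_i, z_l)$, which may fail when $Z$ has widely disparate distances. The remedy is either a non-uniform rescaling of perturbations, additional correction coordinates designed to isolate each pair, or a more subtle iterative scheme; in any event, the concavity gap $\delta$ must be balanced quantitatively against the distortion $\epsilon$ to make the fixed-point argument go through.
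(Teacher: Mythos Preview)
Your overall strategy---Fr\'echet embedding, Krivine, then a Brouwer-type fixed point---is the right one and matches the approach the paper describes (the theorem is quoted from \cite{me}; Section~5 here summarises that proof as ``directly showing that \ldots\ the mapping $\tilde F$ is locally open'', after which the fixed-point step is exactly Lemma~\ref{lem:final}). The gap is in \emph{how} you realise the local right inverse of $\tilde F$, and the obstacle you identify is genuine for your construction and is not resolved by the vague remedies you list.

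By introducing $\binom{n}{2}$ fresh auxiliary coordinates you force $t_{ij}$ to be of order $d(z_i,z_j)$ (it must dominate the contracted Fr\'echet maximum), and then $|t_{ij}|/2$ really can exceed $(1-2\epsilon)d(z_i,z_l)$ for a neighbouring pair; this is not a small-$\epsilon$ issue that goes away. The fix used in \cite{me} is to perturb the Fr\'echet matrix \emph{in place}, working in $\ell_\infty^n$ rather than $\ell_\infty^{n+\binom{n}{2}}$. With $x_i = F(z_i)$, the maximum $|x_i^k - x_j^k|$ is attained precisely at $k\in\{i,j\}$ (both giving $d(z_i,z_j)$), with your gap $\delta$ at every other $k$. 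To adjust $\|x_i-x_j\|_\infty$ one perturbs the off-diagonal entries $x_i^j$ and $x_j^i$ simultaneously. For any other pair $(i,l)$ the entry $x_i^j$ sits at coordinate $j$, which by the concavity gap is not where the max for $(i,l)$ is realised, so a perturbation of size $<\delta$ there leaves $\|x_i-x_l\|_\infty$ unchanged; symmetrically for $x_j^i$ and pairs $(j,l)$. Thus the $\binom{n}{2}$ adjustments are genuinely decoupled, $\tilde F$ has an explicit continuous local right inverse at the Fr\'echet point with no balancing of scales required, and the Brouwer argument of Lemma~\ref{lem:final} finishes the job.
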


In this paper we obtain a partial positive answer to Question \ref{quest:findim} similar to Theorems \ref{thm:l2} and \ref{thm:linf}. As in the case of $p=2$ there remains a class of subsets of $\ell_p$ that our proof does not handle.This collection is certainly small in a strong sense. Our main theorem is as follows,

\begin{thm}\label{thm:main}
Suppose $1 < p < \infty$ and that $Z$ is a Banach space that uniformly contains the spaces $\ell_p^n$, $n \in \N$. Then, for each $n \in \N$, the set of $n$-point subsets of $\ell_p$ that do not embed isometrically into $Z$ is nowhere dense and Haar null.
\end{thm}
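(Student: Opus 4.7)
The plan is to parametrize $n$-point subsets of $\ell_p$ as ordered tuples in $X := (\ell_p)^{n-1}$ (fixing one of the $n$ points at the origin) and to exhibit an analytically defined set $G \subseteq X$ of ``generic'' configurations that is open, dense, Haar conull, and every member of which embeds isometrically into $Z$. The bad set in the theorem is then contained in $X \setminus G$ and the conclusion follows. The proof structure will parallel Shkarin's argument for $p=2$ but must be substantially enhanced to handle the anisotropy of the $\ell_p$-norm when $p \neq 2$.

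For the construction of $G$ I would combine two ingredients. First, the quantitative Krivine theorem (Theorem~\ref{thm:krivine}) supplies, for each $\epsilon > 0$ and $m \in \N$, a subspace $V_{m,\epsilon} \subseteq Z$ that is $(1+\epsilon)$-isomorphic to $\ell_p^m$; truncating $\bar x = (x_1,\ldots,x_{n-1}) \in X$ to its first $m$ coordinates and transporting it through this isomorphism gives an $n$-point configuration in $Z$ whose pairwise distances are close to those of $\bar x$, with error controllable by $m$ and $\epsilon$. Second, to close the residual gap from almost-isometric to isometric, I would use an open-mapping argument for the distance map
\begin{equation*}
F_m \colon (\ell_p^m)^n \to \R^{\binom{n}{2}}, \qquad F_m(y_1,\ldots,y_n) = \bigl(\|y_i - y_j\|_p\bigr)_{1 \le i < j \le n}.
\end{equation*}
Since $1 < p < \infty$, the $\ell_p$-norm is $C^1$ away from $0$, so $F_m$ is $C^1$ off the large diagonal. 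At any configuration $\bar y$ at which $dF_m$ is surjective with quantitative control of a right inverse, every distance vector in a neighbourhood of $F_m(\bar y)$ is realised by a nearby configuration; combined with the first ingredient, this upgrades the almost-isometric copy of $\bar x$ inside $V_{m,\epsilon}$ to an exact isometric copy, provided $\bar x$ satisfies a genericity condition $(\ast)$ guaranteeing such surjectivity.

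The main task---and the principal obstacle---is to identify $(\ast)$ so that its failure set in $X$ is simultaneously nowhere dense and Haar null. I would formulate $(\ast)$ as the non-vanishing of a suitable real-analytic Jacobian minor of $F_m$ in the coordinates of $\bar x$, and then verify: openness is automatic; nowhere-density of the failure set reduces to exhibiting a single witness where $(\ast)$ holds, which one can build by hand from a well-chosen configuration in $\ell_p^n$ (e.g.\ one whose coordinates are algebraically independent over $\Q$); and Haar-nullness of the failure set follows by testing against a Gaussian probe measure supported on a finite-dimensional subspace of $X$ transverse to the relevant analytic zero set, on which slice the failure set has Lebesgue measure zero by the analytic-geometry observation that a nonzero real-analytic function vanishes on a null set. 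Unlike in the Hilbert case $p=2$, where affine independence alone suffices, for general $p \in (1,\infty)$ the non-degeneracy condition is strictly stronger and must rule out subtler algebraic coincidences between the coordinates of the $x_i$; carrying these through the quantitative Krivine approximation, while keeping all error estimates uniform in $m$, will be the most delicate part of the argument.
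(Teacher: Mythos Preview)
Your overall architecture matches the paper's: Krivine supplies almost-isometric copies of $\ell_p^m$ inside $Z$; a submersion argument for the distance map $F_m$ gives local surjectivity; a genericity condition (the paper calls it \emph{Property K}) cuts out the good set $G$, whose complement is closed and Haar null. Two points, however, need attention.

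The substantive gap is the sentence ``combined with the first ingredient, this upgrades the almost-isometric copy of $\bar x$ inside $V_{m,\epsilon}$ to an exact isometric copy.'' Surjectivity of $dF_m$ gives you a continuous local right inverse $\Psi$ for the $\ell_p$-distance map, so any prescribed vector of \emph{$\ell_p$-distances} near $F_m(\bar y)$ is realised by a nearby configuration in $\ell_p^m$. But what you actually need is a configuration in $V_{m,\epsilon}$ whose \emph{$Z$-distances} equal those of $\bar x$, and the $Z$-norm on $V_{m,\epsilon}$ is an arbitrary, possibly non-smooth norm that is merely $(1+\epsilon)$-equivalent to $\|\cdot\|_p$. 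The submersion theorem says nothing about that norm; transporting a configuration through the Krivine isomorphism re-introduces the error you were trying to kill. The paper closes this gap in Lemma~\ref{lem:final}: one composes $\Psi$ with the $E$-distance map to obtain a continuous self-map of a cube $[0,\epsilon]^{\binom{n}{2}}$, and Brouwer's fixed-point theorem produces the exact isometric copy. Some such topological device (Brouwer, degree theory) is essential here and is absent from your sketch. Relatedly, the paper separates the truncation error from the Krivine error by first proving a Ball-type lemma (Lemma~\ref{lem:ball}) that embeds a generic $\bar x\in\ell_p$ \emph{isometrically} into some $\ell_p^N$ while preserving the genericity; your plan folds both errors into one step, which is feasible but only once the fixed-point mechanism is in place.

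A secondary issue concerns your measure-zero argument. The Jacobian minors of $F_m$ are not globally real-analytic in the coordinates of $\bar x$ when $p$ is not an even integer, since $t\mapsto |t|^{p-1}\,\mathrm{sgn}(t)$ fails to be analytic at $0$. They are analytic only on each connected component of the complement of the arrangement $\{x_i^k=x_j^k\}$. Thus ``a nonzero real-analytic function vanishes on a null set'' must be run component by component, and one needs a witness for $(\ast)$ in \emph{every} component; a single configuration with algebraically independent coordinates does not suffice. The paper handles this by a permutation-symmetry reduction to a canonical component and then exhibits an explicit triangular witness there (Lemmas~\ref{lem:tri} and~\ref{lem:line}).
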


We now describe how our paper is organized. We shall also explain why the case of $p \in (1,\infty)$ is more difficult than the special cases of $p=2$ and $p=\infty$ and how we handle the additional difficulty.

In Section \ref{sec:def} we recall various definitions and results that will be used throughout the article (and have already been used in this introduction). The proof of Theorem \ref{thm:main} begins in Section \ref{sec:perturb}. Here we prove a result (see Theorem \ref{thm:perturb}) that may be of independent interest: almost all $n$-point subsets of $\ell_p^n$ have the property that small perturbations of that subset remain subsets of $\ell_p^n$. In Section \ref{sec:brou} we introduce \emph{Property K} of finite subsets of $\ell_p$. Our aim will be to show that every finite subset of $\ell_p$ with Property $K$ embeds isometrically into a Banach space $X$ that satisfies the assumption of Theorem \ref{thm:main}.

For general $p \in (1,\infty)$, Property $K$ plays the r\^{o}le of affine independence in the case $p=2$, or concavity in the case $p=\infty$. For $p=2$, any $n$-point subset of $\ell_2$ embeds isometrically into $\ell_2^n$ via an orthogonal transformation which preserves affine independence. For $p=\infty$, any $n$-point metric space embeds into $\ell_\infty^n$ via an isometry, which preserves concavity. For general $p \in (1,\infty)$ it is not even clear if a finite subset of $\ell_p$ embeds isometrically into $\ell_p^N$ for any $N$. In fact, this is true: Ball proved in \cite{ball} that any $n$-point subset of $\ell_p$ embeds isometrically into $\ell_p^N$ with $N= \binom{n}{2}$. The difficulty is that Ball's proof is not constructive, and Property $K$ is somewhat technical. In Section \ref{sec:brou} we prove a version of Ball's result (Lemma \ref{lem:ball}) which is much weaker, in the sense that $N$ will depend on the subset. However, Lemma \ref{lem:ball}, will show that our embedding will preserve Property $K$. 

\begin{rem}
In this article, we do not pay much attention to the case $p=1$. Indeed, as stated, Question \ref{quest:infdim} is false. As for $\ell_\infty$, there is a strictly convex renorming $X$ of $\ell_1$, and no finite subset of $\ell_1$ that fails the unique metric midpoint property embeds isometrically into such an $X$. However, one might expect a result similar to Theorem \ref{thm:linf} to hold, when there's a restriction on the type of subset we consider. The methods of this paper rely heavily on the differentiability of the norm of $\ell_p$ for $1<p<\infty$ which fails for $p=1$. Thus our techniques only produce weak conclusions in the case $p=1$.
\end{rem}
\section{Classical Results and Notation}\label{sec:def}
\subsection{Banach Space Definitions and Classical Results:}\label{sec:ban} Throughout this paper, for simplicity, we will only be interested in \emph{real} Banach spaces.

Suppose that $X$ and $Y$ are Banach spaces. The \emph{Banach-Mazur distance between $X$ and $Y$} is defined by $d(X,Y) = \inf \{ \|T\| \|T^{-1}\| : T$ is an isomorphism from $X$ to $Y\}$. We say that a Banach space $X$ is \emph{$C$-isomorphic} to a Banach space $Y$ if there is a linear isomorphism $T:X \rightarrow Y$ such that $\|T\| \|T^{-1}\| \leq C$. We say that a Banach space $X$ \emph{almost isometrically contains} a Banach space $Y$, or that $Y$ \emph{almost isometrically embeds into $X$}, if for each $\epsilon > 0$ there is a subspace $Z$ of $X$ such that $Z$ is $(1+\epsilon)-$isomorphic to $Y$. We say that a Banach space $X$ \emph{uniformly contains} spaces $X_n$, $n \in \N$, if there exist a constant $C$ and subspaces $Y_n$ of $X$ such that $Y_n$ is $C$-isomorphic to $X_n$ for all $n$.

We will need the following quantitative version of Krivine's theorem:

\begin{thm}\label{thm:krivine}
Let $1 \leq p \leq \infty$, $C \geq 1$, $\epsilon > 0$ and $k \in \N$. Then there is some $n$ (dependent on $p,C,k$ and $\epsilon$) such that if a Banach space $X$ is $C$-isomorphic to $\ell_p^n$ then there is a subspace of $X$ that is $(1+\epsilon)$-isomorphic to $\ell_p^k$.
\end{thm}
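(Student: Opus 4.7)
The plan is to argue by contradiction, using a Banach space ultraproduct together with the \emph{qualitative} form of Krivine's theorem: any Banach space that contains $\ell_p$ isomorphically contains $\ell_p^k$ almost isometrically for every $k \in \N$.

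Suppose the conclusion fails for some fixed $p, C, \epsilon, k$. Then for every $n \in \N$ there is a bad space $X_n$ which is $C$-isomorphic to $\ell_p^n$ yet contains no subspace that is $(1+\epsilon)$-isomorphic to $\ell_p^k$. Fix a non-principal ultrafilter $\mathcal{U}$ on $\N$ and form the Banach space ultraproduct $\widetilde X = \prod_{\mathcal U} X_n$. Let $T_n : \ell_p^n \to X_n$ be the hypothesised isomorphisms, normalised so that $\|T_n^{-1}\| \leq 1$ and $\|T_n\| \leq C$. These induce a $C$-isomorphism $\widetilde T : \prod_{\mathcal U} \ell_p^n \to \widetilde X$. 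It is standard that $\prod_{\mathcal U} \ell_p^n$ contains an isometric copy of $\ell_p$ (for $p < \infty$, take the canonical unit vectors $e_j^{(n)} \in \ell_p^n$, extended by $0$ when $j > n$, and consider the classes $\tilde e_j = [e_j^{(n)}]_{\mathcal U}$; their closed linear span is isometric to $\ell_p$, with the case $p = \infty$ being analogous). Hence $\widetilde X$ contains a $C$-isomorphic copy of $\ell_p$.

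Applying qualitative Krivine to $\widetilde X$ yields a $k$-dimensional subspace $V \subseteq \widetilde X$ at Banach-Mazur distance at most $1 + \epsilon/2$ from $\ell_p^k$. The final step is the standard ultraproduct descent: every finite-dimensional subspace of $\widetilde X$ is finitely representable in the family $\{X_n\}_{n}$, so for any $\delta > 0$ there is some $n$ for which $X_n$ contains a subspace $(1+\delta)$-isomorphic to $V$. Choosing $\delta$ small enough that $(1+\epsilon/2)(1+\delta) \leq 1+\epsilon$ produces inside some $X_n$ a subspace at Banach-Mazur distance at most $1+\epsilon$ from $\ell_p^k$, contradicting the choice of $X_n$.

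The main obstacle is invoking the qualitative Krivine theorem as a black box in the ultraproduct and then tracking constants through the descent from $\widetilde X$ back to an individual $X_n$. Krivine's result is itself deep, and a direct, compactness-free proof of the quantitative statement would amount to extracting explicit bounds from each step of Krivine's original blocking/spreading-model argument; the ultraproduct route side-steps this by trading effectiveness for the cleanliness of a limiting argument.
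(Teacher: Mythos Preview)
The paper does not prove this theorem at all: it is stated as a known classical result and the reader is referred to Amir--Milman for a proof (with explicit estimates on $n$). There is therefore no in-paper argument to compare against.

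Your ultraproduct reduction is correct and is the standard ``soft'' route from the infinite-dimensional Krivine theorem to its finite quantitative form. The key steps all go through: the ultraproduct $\prod_{\mathcal U}\ell_p^n$ contains $\ell_p$ isometrically, so $\widetilde X$ contains a $C$-isomorphic copy of $\ell_p$; the qualitative Krivine theorem (via the fact that normalised block sequences of a sequence $C$-equivalent to the $\ell_p$-basis are again $C$-equivalent to the $\ell_p$-basis, forcing the Krivine exponent to equal $p$) then places an almost-isometric $\ell_p^k$ inside $\widetilde X$; and the finite-dimensional descent lemma for ultraproducts pushes this into some $X_n$. The contrast with the cited reference is exactly the one you identify: Amir--Milman extract an explicit $n = n(p,C,k,\epsilon)$ by working through the blocking argument quantitatively, whereas your compactness proof gives existence only. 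Since the present paper only ever uses the existence of such an $n$, nothing is lost for its purposes.
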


For a proof of this theorem, including estimates of the constants involved, we refer the reader to \cite{kriv}.

We introduce a notion of a null set in infinite-dimensional Banach spaces. A well known fact is that if $X$ is infinite-dimensional and separable, and $\mu$ is a translation-invariant Borel measure on $X$, then $\mu$ either assigns $0$ or $\infty$ to every open subset of $X$. However, there are several useful notions of null set in Banach spaces under which the null sets form a translation-invariant $\sigma$-ideal. One such notion, that we shall use, is that of a \emph{Haar null} set. A Borel set $A \subset X$ is called \emph{Haar null} if there is a Borel probability measure $\mu$ on $X$ such that $\mu(x+A) = 0$ for every $x \in X$. It is easy to see that if for some $n \in \N$ there is an $n$-dimensional subspace $Y$ of $X$ such that the measure $\lambda_n(Y \cap (A+x)) = 0$ for all $x \in X$, where $\lambda_n$ is $n$-dimensional Lebesgue measure, then $X$ is Haar null. More on sets of this type, and on other notions of nullity, can be found in \cite[Chapter 6]{gnfa}.

\subsection{Submersions:} We will need a fact from the theory of Differential Geometry related to submersions. Suppose we have a $C^1$-map $\Phi: \mathbb{R}^n \rightarrow \mathbb{R}^m$ where $n \geq m$. We say that $\Phi$ is a \emph{submersion} at a point $x$ if the derivative $D\Phi|_x$ of $\Phi$ at $x$ has rank $m$. The following result is known as the Submersion Theorem and can be found in any introductory text on Differential Geometry:
\begin{thm}
Suppose $\Phi:\R^n \rightarrow \R^m$ is a $C^1$-map, where $n \geq m$. If $\Phi$ is a submersion at a point $x$, then there are open sets $A \subset \R^n$ and $B \subset \R^m$ with $x \in A$, $\Phi(x) \in B$ and $\Phi(A) = B$. Moreover, there is a $C^1$-map $\Psi:B \rightarrow A$ such that $\Phi \circ \Psi$ is the identity on $B$ and $\Psi(\Phi(x)) = x$.
\end{thm}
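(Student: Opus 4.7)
The plan is to deduce the Submersion Theorem from the Inverse Function Theorem by extending $\Phi$ to a locally invertible map $\R^n\to\R^n$ via a standard completion trick.

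First, since $D\Phi|_x$ has rank $m$, after permuting coordinates in $\R^n$ I may assume that the $m\times m$ submatrix formed by the first $m$ columns of $D\Phi|_x$ is invertible. Define an auxiliary $C^1$-map $F:\R^n\to\R^n$ by
\[
F(y_1,\ldots,y_n) \;=\; \bigl(\Phi(y_1,\ldots,y_n),\, y_{m+1},\ldots,y_n\bigr).
\]
The Jacobian of $F$ at $x$ is block lower triangular with the chosen invertible $m\times m$ block in the top-left corner and the identity in the bottom-right block, so $DF|_x$ is invertible. The Inverse Function Theorem then yields open neighbourhoods $U\ni x$ and $V\ni F(x)$ such that $F:U\to V$ is a $C^1$-diffeomorphism, with $C^1$ inverse $G:V\to U$.

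Next, I pass to a product neighbourhood inside $V$. Since $V$ is open and contains $F(x)=(\Phi(x),x_{m+1},\ldots,x_n)$, choose open sets $B\subset\R^m$ with $\Phi(x)\in B$ and $W\subset\R^{n-m}$ with $(x_{m+1},\ldots,x_n)\in W$ such that $B\times W\subset V$. Put $A:=F^{-1}(B\times W)\cap U$; this is an open neighbourhood of $x$. Writing $\pi_1$ for the projection to the first $m$ coordinates, one has $\Phi=\pi_1\circ F$ on $U$, hence
\[
\Phi(A)\;=\;\pi_1\bigl(F(A)\bigr)\;=\;\pi_1(B\times W)\;=\;B.
\]
Finally, define $\Psi:B\to A$ by $\Psi(y)=G(y,x_{m+1},\ldots,x_n)$; this is a composition of $C^1$-maps, hence $C^1$, and lands in $A$ because $(y,x_{m+1},\ldots,x_n)\in B\times W$. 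Then $\Phi(\Psi(y))=\pi_1(F(G(y,x_{m+1},\ldots,x_n)))=\pi_1(y,x_{m+1},\ldots,x_n)=y$, and $\Psi(\Phi(x))=G(F(x))=x$.

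The only nontrivial ingredient is the Inverse Function Theorem; the rest is bookkeeping. The one subtlety worth watching is that $V$ need not be a product, which is why I shrink to a product neighbourhood $B\times W$ before defining $\Psi$; without this step the formula for $\Psi$ might leave $V$. Once this is done, all the stated properties fall out immediately from $\Phi=\pi_1\circ F$ and $G=F^{-1}$.
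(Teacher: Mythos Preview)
Your proof is correct and follows the standard route via the Inverse Function Theorem. The paper itself does not prove this statement: it simply quotes the Submersion Theorem as a classical fact ``found in any introductory text on Differential Geometry'' and uses it as a black box. So there is nothing to compare against---you have supplied a proof where the paper gives none.

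One small slip: the Jacobian of your extended map $F$ is block \emph{upper} triangular (the bottom-left $(n-m)\times m$ block vanishes), not block lower triangular as you wrote. This does not affect the argument, since either way the determinant factors as the product of the determinants of the diagonal blocks, and hence $DF|_x$ is invertible. Everything else---shrinking to a product neighbourhood $B\times W$ before defining $\Psi$, and the verification that $\Phi\circ\Psi=\mathrm{Id}_B$ and $\Psi(\Phi(x))=x$---is handled cleanly.
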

\section{A Theorem about Finite Subsets of \texorpdfstring{$\ell_p^n$}{lpn}}\label{sec:perturb}
In this section we establish a preliminary result that may be of independent interest. Suppose that $Z$ is a metric space on a sequence of points $(z_i)_{i=1}^n$ and $Y$ is a metric space on a sequence of points $(y_i)_{i=1}^n$. We say that $Y$ is an \emph{$\epsilon$-perturbation} of $Z$ if for each pair $i,j$ we have that $|d_Z(z_i,z_j) - d_Y(y_i,y_j)| <\epsilon$.

In finite dimensions, the phrase \emph{almost all} will only be used with respect to Lebesgue measure. Throughout this section, we fix some $n \in \N$ and $p \in \R$ with $1 < p < \infty$. We denote by $\|.\|$ the $p$-norm on $\ell_p^n$.

\begin{thm}\label{thm:perturb}
For almost all $n$-point subsets $X$ of $\ell_p^n$, there is an $\epsilon > 0$ such that if $Y$ is an $\epsilon$-perturbation of $X$ then $Y$ isometrically embeds into $\ell_p^n$.
\end{thm}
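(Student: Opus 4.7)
The plan is to identify an ordered $n$-tuple $(x_1,\ldots,x_n)$ of points in $\ell_p^n$ with a single element of $\R^{n^2}=(\ell_p^n)^n$ and to study the distance map
\[
\Phi \colon \R^{n^2} \longrightarrow \R^{\binom{n}{2}}, \qquad X=(x_1,\ldots,x_n) \longmapsto \bigl(\|x_i-x_j\|\bigr)_{1 \le i<j\le n}.
\]
If $\Phi$ is a submersion at $X$, then the Submersion Theorem of Section~\ref{sec:def} yields open neighbourhoods $A\ni X$ and $B\ni\Phi(X)$ with $\Phi(A)=B$; thus, for every $\eps>0$ small enough that the distance vector of any $\eps$-perturbation $Y$ of $X$ lies in $B$, some configuration in $A$ realises $Y$ isometrically, and $Y$ embeds into $\ell_p^n$. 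The theorem therefore reduces to showing that the set of $X\in\R^{n^2}$ at which $D\Phi|_X$ fails to have maximal rank $\binom{n}{2}$ has Lebesgue measure zero.

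Since $1<p<\infty$, the norm $\|\cdot\|$ is $C^1$ off the origin, so $\Phi$ is $C^1$ on the complement of the measure-zero locus $\bigcup_{i\ne j}\{x_i=x_j\}$. To exploit analyticity I would further decompose the domain into the open pieces $V_\sigma$ indexed by a choice of sign for every coordinate of every difference $x_i-x_j$; the union of the $V_\sigma$ has full measure, and on each $V_\sigma$ the signs of the coordinates of the $x_i-x_j$ are constant, so the norms $\|x_i-x_j\|$, and hence the entries of $D\Phi$, are real-analytic functions of $X$. Any $\binom{n}{2}\times\binom{n}{2}$ minor of $D\Phi$ is therefore real-analytic on $V_\sigma$ and either vanishes identically on $V_\sigma$ or only on a proper analytic subvariety of Lebesgue measure zero. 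The problem is thus reduced to exhibiting, in each $V_\sigma$, at least one configuration at which some maximal minor of $D\Phi$ is nonzero.

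Writing $u_{ij}:=\nabla\|\cdot\|(x_i-x_j)\in\R^n$, the row of $D\Phi$ indexed by $\|x_i-x_j\|$ is the vector in $\R^{n^2}=(\R^n)^n$ whose $i$-th block equals $u_{ij}$, whose $j$-th block equals $-u_{ij}$, and whose remaining blocks vanish. Using $u_{ji}=-u_{ij}$, a direct calculation shows that a sufficient condition for these $\binom{n}{2}$ rows to be linearly independent is that, for every $k\in\{1,\ldots,n\}$, the $n-1$ vectors $\{u_{km}:m\ne k\}$ are linearly independent in $\R^n$. The main obstacle will be verifying this condition inside an arbitrary sign-orthant $V_\sigma$: the $u_{km}$ cannot be chosen independently, since they are jointly determined by the single tuple $X$. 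However, for $1<p<\infty$ the gradient map $\nabla\|\cdot\|$ is smooth and sends each coordinate orthant of $\R^n$ diffeomorphically onto the corresponding orthant of the dual unit sphere, so one has substantial freedom to perturb the individual $u_{km}$ within the sign constraints defining $V_\sigma$. Since only $n-1<n$ of these vectors must be in general position at each vertex $k$, I expect that a dimension-counting plus perturbation argument (or an induction on $n$) inside $V_\sigma$ produces the required full-rank configuration and completes the proof.
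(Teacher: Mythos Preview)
Your overall architecture matches the paper's: study the distance map, invoke the Submersion Theorem, decompose the configuration space into the open ``sign orthants'' on which the map is real-analytic, and reduce to exhibiting one full-rank configuration in each orthant. (The paper uses $\|x_i-x_j\|^p$ rather than $\|x_i-x_j\|$, but this is cosmetic since $t\mapsto t^p$ is a diffeomorphism on $(0,\infty)$; and the paper phrases analyticity along line segments rather than on the whole orthant, but the idea is the same.)

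The genuine gap is exactly the step you flag as the ``main obstacle'' and then wave away. Your sufficient condition --- that for every $k$ the gradients $\{u_{km}:m\ne k\}$ be linearly independent --- is correct, but your justification for why it can be met inside an arbitrary orthant is not. You write that the gradient map gives ``substantial freedom to perturb the individual $u_{km}$'', but the $u_{km}$ are \emph{not} individually perturbable: all $\binom{n}{2}$ of them are determined by the single $n$-tuple $(x_1,\ldots,x_n)$, and the constraints (sign pattern of every coordinate of every $x_i-x_j$) couple them tightly. A bare dimension count does not settle this, and no induction scheme is indicated. This is precisely the heart of the proof, and you have not done it.

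The paper's solution here is worth knowing. It does \emph{not} try to find a full-rank point in the interior of each orthant. Instead it exhibits an explicit ``triangular'' family $H=\{(x_1,\ldots,x_n): x_i=e_i+\sum_{j>i}x_i^j e_j\}$ on which a specific square minor of $DF$ is lower-triangular with nonzero diagonal (Lemma~\ref{lem:tri}); note that $H$ lies on the \emph{boundary} of the orthants, not inside them. A permutation-of-points symmetry then reduces every orthant to one adjacent to $H$. Finally, since the full-rank point is only on the boundary, the paper connects it to an interior density point by a straight segment, shows the relevant determinant is analytic along that segment (hence has finitely many zeros), and finishes with Fubini. The concrete construction of $H$ together with the permutation reduction is the missing idea in your proposal.
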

For our purposes, we will need a slightly stronger property of an $n$-point subset $X$ of $\ell_p^n$. We will need that an $\epsilon$-perturbation of $X$ isometrically embeds into $\ell_p^n$ in a way that depends continuously on the perturbation (in a way we will make precise in the sequel.) This is the content of Theorem \ref{thm:precise} below, from which Theorem \ref{thm:perturb} will easily follow. To state Theorem \ref{thm:precise} we will first develop some notation.

Let $M = M_n = \underbrace{\R^n \times \dots \times \R^n}_{n \text{ times}}$ and let $U = U_n$ denote the $n \times n$ upper triangular matrices with 0 on the diagonal. We let $e_1,\dots,e_n$ be the standard basis of $\R^n$ and $e_i^j$ be the element of $M$ with $e_j$ in the $i$\textsuperscript{th} co-ordinate and zero everywhere else. Note that $e_i^j$, $1 \leq i,j \leq n$, form a basis of $M$. Given $x=(x_1,\dots,x_n) \in M$ we denote the $j$\textsuperscript{th} co-ordinate (with respect to the standard basis) of the vector $x_i$ as $x_i^j$ so that $x = \sum_{i,j} x_i^j e_i^j$. Let $E_{ij}$ be the $n\times n$ matrix with $1$ in the $(i,j)$-entry and 0 elsewhere. Note that $E_{ij}$, $1 \leq i < j \leq n$ forms a basis for $U$, so the dimension of $U$ is $\binom{n}{2}$.

We define the map $F = F_n : M \rightarrow U$ by $$F(x_1,\dots,x_n) = (\|x_i-x_j\|^p)_{1 \leq i < j \leq n}. $$We observe that $F$ is a $C^1$-map. Indeed, by computing the partial derivatives in the direction $e_l^k$ we get:
\begin{equation}\label{eq:derivcomp}
\frac{\partial F}{\partial e_l^k} (z_1,\dots,z_n) = \left( p|z^k_i - z^k_j |^{p-1} \text{ sgn}(z_i^k - z_j^k) (\delta_{il} - \delta_{jl}) \right)_{1 \leq i < j \leq n},
\end{equation}
and these are evidently continuous. Theorem \ref{thm:perturb} says that $F$ is locally open at almost all $n$-tuples $(x_1,\dots,x_n)$. This is contained in the following theorem:
\begin{thm}\label{thm:precise}
Let $F: M \rightarrow U$ be defined as above. Set $G = G_n = \{x \in M : DF|_x \text{ has rank } \binom{n}{2} \}$. Then $G$ is an open subset of $M$ whose complement has measure zero (and is thus nowhere dense.) Moreover, given $x \in G$, there is an open subset $A$ of $M$ containing $x$, an open subset $B$ of $U$ containing $F(x)$ and a $C^1$-map $\Phi: B \rightarrow A$ such that $F \circ \Phi = $ Id$_B$ and $\Phi(F(x)) = x$.
\end{thm}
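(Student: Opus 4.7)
My plan is to establish the three assertions in turn. Openness of $G$ is straightforward: by (\ref{eq:derivcomp}) the entries of $DF|_x$ depend continuously on $x$, and the condition that $DF|_x$ has rank $\binom{n}{2}$ is equivalent to the nonvanishing of at least one $\binom{n}{2} \times \binom{n}{2}$ minor of $DF|_x$, which is an open condition. The existence of the local right inverse $\Phi$ at any $x \in G$ is immediate from the Submersion Theorem of Section \ref{sec:def}, since the definition of $G$ is precisely that $F$ is a submersion at $x$.

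The bulk of the work is in showing that $M \setminus G$ has Lebesgue measure zero. Set
\[
W = \{z \in M : z_i^k \neq z_j^k \text{ for all } i \neq j \text{ and all } k\}.
\]
Its complement in $M$ is a finite union of hyperplanes, hence measure zero, so it suffices to prove $W \setminus G$ has measure zero. The open set $W$ has finitely many connected components, parameterized by a total ordering of $(z_1^k,\dots,z_n^k)$ for each $k$. On such a component $C$ all of the signs $\text{sgn}(z_i^k - z_j^k)$ are constant, so by (\ref{eq:derivcomp}) every entry of $DF|_z$ equals $\pm p(z_i^k - z_j^k)^{p-1}$ with a fixed sign, and since $t \mapsto t^{p-1}$ is real-analytic on $(0,\infty)$ each entry of $DF|_z$ is real-analytic in $z$ on $C$. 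Consequently every $\binom{n}{2} \times \binom{n}{2}$ minor of $DF|_z$ is real-analytic on $C$, and by the identity theorem its zero set has measure zero in $C$ unless it vanishes identically. So it suffices to exhibit, on each component $C$, at least one $\binom{n}{2} \times \binom{n}{2}$ minor of $DF$ that is not identically zero on $C$.

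To verify this I would use the symmetries of $F$: permutations of the indices of $x$, permutations of the $n$ coordinates of $\R^n$, and coordinate sign-flips $x_i^k \mapsto -x_i^k$. Each transforms $F$ by composition with a permutation of the target $U$, and so preserves the rank of $DF$ while permuting the components of $W$. After reducing via these symmetries to a set of representative components, one exhibits in each a specific configuration $x^* \in C$ and a specific $\binom{n}{2} \times \binom{n}{2}$ submatrix of $DF|_{x^*}$ whose determinant is nonzero for every $p \in (1,\infty)$. A natural test configuration is a Vandermonde-type one: for example $x_i = (i, i^2, \dots, i^n)$, which lies in the ``identity'' component $\{z_1^k < z_2^k < \dots < z_n^k \text{ for all } k\}$. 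For $n = 3$ at this point the submatrix with columns $\partial F/\partial e_1^1$, $\partial F/\partial e_2^1$, $\partial F/\partial e_1^2$ has determinant $p^3(8^{p-1}-6^{p-1})$, which is strictly positive for all $p > 1$.

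The main obstacle I foresee is this last verification for general $n$. The symmetries above do not act transitively on the $(n!)^n$ components of $W$ when $n \ge 3$, so one cannot reduce everything to a single case; producing the required nonvanishing minor on every component probably demands either an inductive argument on $n$ (fix $x_n$ in generic position and reduce to a rank statement about the $(n-1)$-point Jacobian plus a contribution from the partial derivatives in $x_n$) or a Laplace expansion that exploits the Vandermonde structure of the test configuration. Once this is settled, on each component of $W$ the bad set is contained in the zero set of a nonzero real-analytic function and so has measure zero, giving $M \setminus G$ measure zero; since $G$ is open, $M \setminus G$ is closed, and a closed set of Lebesgue measure zero has empty interior and is therefore nowhere dense.
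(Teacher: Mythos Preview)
Your overall architecture is sound and in fact cleaner than the paper's in one respect: once you know that each connected component $C$ of $W$ meets $G$, real-analyticity of the minors on $C$ immediately gives $\mu(C\setminus G)=0$, with no need for the paper's Lebesgue-density-plus-Fubini argument along lines. The paper instead proves a one-variable analyticity lemma (its Lemma~\ref{lem:line}) showing that along any segment inside a component the bad set is finite, and then integrates over a pencil of lines; your multivariable real-analyticity observation subsumes that.

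The genuine gap is exactly the one you flag: you have not shown that every component of $W$ contains a point of $G$, and your proposed route via symmetries and Vandermonde test points does not close it. As you note, the group generated by index permutations, coordinate permutations, and coordinate sign-flips has order at most $n!\cdot n!\cdot 2^n$, far smaller than the $(n!)^n$ components, so no finite list of test configurations will do. The paper handles this differently and quite concretely. First it uses only the index-permutation symmetry $A_\pi$ to reduce to components $C$ lying in the region $R=\{x: x_i^i>x_j^i \text{ for } i<j\}$. Second, it identifies an explicit ``upper-triangular'' set $H=\{x: x_i=e_i+\sum_{j>i}x_i^j e_j\}$ and shows by a direct triangular computation (Lemma~\ref{lem:tri}) that the partial derivatives $\partial F/\partial e_l^k$, $k<l$, are linearly independent at every point of $H$; hence $H\subset G$. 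Third, and this is the step replacing your missing induction, for any $y$ in a component $C\subset R$ it constructs a $\{0,1\}$-valued point $x\in H$ with $(1-t)x+ty\in C$ for all $t\in(0,1]$. Since $G$ is open and $x\in G$, small $t$ gives a point of $C\cap G$. Plugging this construction into your framework (in place of the Vandermonde idea) completes your argument, and indeed yields a shorter proof than the paper's own, since your real-analyticity step makes the density and Fubini arguments unnecessary.
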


Let us briefly spell out how Theorem \ref{thm:perturb} follows from Theorem \ref{thm:precise}. Suppose that $\{x_1,\dots,x_n\}$ is an $n$-point subset of $\ell_p^n$ and that $x=(x_1,\dots,x_n) \in G$. Define $X_{ij} = \|x_i-x_j\|^p$ and $X = \left(X_{ij}\right)_{1 \leq i < j \leq n}$. Then, since $x \in G$, by Theorem \ref{thm:precise} there are open subsets $A$ of $M$ and $B$ of $U$ such that $x \in A$, $F(x) = X \in B$ and $F(A) = B$. Thus there is some $\epsilon > 0$ such that if $|Y_{ij} - X_{ij}| < \epsilon$ for all $i,j$, then $(Y_{ij})_{1 \leq i < j \leq n}$ is an element of $B$ and thus is the image under $F$ of some $y=(y_1,\dots,y_n) \in A$. Hence $Y$ defines a metric on an $n$-point set and the resulting metric space embeds isometrically into $\ell_p^n$. This is slightly more than the statement that $\epsilon$-perturbations of the metric space $\{x_1,\dots,x_n\}$ with the inherited metric embed isometrically into $\ell_p^n$.

\begin{proof}[Proof of Theorem \ref{thm:precise}]
We first show that $G$ is open. Indeed, if $x \in G$, then there is a linear map $B: U \rightarrow M$ such that $DF|_x \circ B = $ Id$_U$. Since $DF$ is continuous, there is some $\epsilon > 0$ such that whenever $y$ is such that $\|x-y\| < \epsilon$, $\|DF|_y \circ B - $ Id$_U \| < 1$. Thus, $DF|_y \circ B$ is invertible, and $DF|_y$ has full rank.

We now show that $M \setminus G$ has measure zero. Once we do this, the proof of the theorem is then complete. Indeed, the rest of the statement of Theorem \ref{thm:precise} follows immediately from the Submersion Theorem.

The proof that $M \setminus G$ has measure zero is done in several steps. We first identify a certain subset of $G$.
\begin{lem}\label{lem:tri}
Let $H = \{(x_1,\dots,x_n)\in M : x_i = e_i + \sum_{j=i+1}^n x_i^j e_j$ for each $i=1,\dots,n\}$. Then if $x \in H$, the partial derivatives $\frac{\partial F}{\partial e_l^k} (x)$, $1 \leq k < l \leq n$ are linearly independent. In particular, $H \subset G$.
\end{lem}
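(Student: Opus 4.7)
The plan is to compute each partial derivative $\frac{\partial F}{\partial e_l^k}$ at a point $x \in H$ explicitly from equation~\eqref{eq:derivcomp} and then argue via a block-triangular decomposition in $U$. Write $v^{k,l}$ for $\frac{\partial F}{\partial e_l^k}(x) \in U$, whose coordinates are indexed by pairs $(i,j)$ with $i < j$. The factor $\delta_{il} - \delta_{jl}$ in \eqref{eq:derivcomp} vanishes unless $l \in \{i,j\}$, so a priori $v^{k,l}$ is supported on row $l$ and column $l$ of $U$.

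The first step is to show that for $x \in H$ the row-$l$ entries of $v^{k,l}$ actually vanish. Indeed, at a position $(l, j)$ with $j > l$, the coefficient is a multiple of $|x_l^k - x_j^k|^{p-1} \sgn(x_l^k - x_j^k)$, and since $k < l < j$, the condition $x_i^j = 0$ for $j < i$ that defines $H$ forces both $x_l^k = 0$ and $x_j^k = 0$. The remaining entries lie in column $l$, at positions $(i, l)$ with $i < l$. For such an entry $x_l^k = 0$ once more, and a case split on $i$ gives: when $i > k$ the coordinate $x_i^k$ is zero, so the entry vanishes; when $i = k$ one has $x_i^k = x_k^k = 1$, so the entry equals $-p$; and when $i < k$ the coordinate $x_i^k$ is a free parameter, so the entry is some real number depending on $x$.

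What remains is pure linear algebra. For each fixed $l$ the $l-1$ vectors $v^{1,l}, \dots, v^{l-1,l}$ are all supported in the $(l-1)$-dimensional subspace of $U$ spanned by the column-$l$ basis vectors $\{E_{i,l} : 1 \le i < l\}$; written in this basis their coordinates form an $(l-1)\times(l-1)$ matrix which is lower triangular in the $(k, i)$ indices with $-p$ on the diagonal, and hence invertible. Vectors belonging to different values of $l$ occupy disjoint coordinate subspaces of $U$, so the full family $\{v^{k,l} : 1 \le k < l \le n\}$ is linearly independent; since it has cardinality $\binom{n}{2} = \dim U$ it spans $U$, whence $DF|_x$ has rank $\binom{n}{2}$ and $x \in G$. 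The only mildly delicate point is the bookkeeping in the component-vanishing step; once those cases are handled the linear-algebra conclusion is automatic, so I do not anticipate any genuine obstacle beyond a careful case analysis.
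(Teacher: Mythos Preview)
Your proof is correct and follows essentially the same approach as the paper: both compute from \eqref{eq:derivcomp} and the definition of $H$ that $\frac{\partial F}{\partial e_l^k}(x)$ is supported on the column-$l$ basis vectors $E_{il}$ with $i\le k$ and has $(k,l)$-entry equal to $-p$, and then deduce linear independence from this triangular structure. The only cosmetic difference is that you organize the final step as a block-by-column lower-triangular matrix argument, whereas the paper phrases it as an induction on $k$ showing each $E_{kl}$ lies in the span; these are the same observation.
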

\begin{proof}
Fix $x = (x_1,\dots,x_n) \in H$. By \eqref{eq:derivcomp} we see that the $(i,j)$-entry of $\frac{\partial F}{\partial e_l^k} (x)$ is zero unless $j=l$ and $i \leq k$. We can hence expand $\frac{\partial F}{\partial e_l^k} (x)$ in terms of the matrices $E_{kl}$ as follows, $$ \frac{\partial F}{\partial e_l^k} (x) = - p E_{kl} + \sum_{i=1}^{k-1} \alpha_i^k E_{il},$$where $\alpha_i^k$ are constants depending on $x$. It follows by induction on $k$ that $E_{kl}$ is in the span of $\frac{\partial F}{\partial e_i^j} (x)$ for all $1 \leq k < l \leq n$. This completes the proof of the lemma.
\end{proof}

Let us now define $V = \{x = (x_1,\dots,x_n) \in M : $ there are $i,j,k \in \{1,\dots,n\}$ such that $i \neq j$ and $x_i^k = x_j^k \}$. Note that $M \setminus V$ has finitely many connected components which are open and convex. Since $\mu(V) = 0$, in order to show that $\mu(M \setminus G) = 0$, it suffices to show that $\mu(C \setminus G) = 0$ for every connected component $C$ of $M \setminus V$. The following lemma will be vital to this aim.

\begin{lem}\label{lem:line}
Suppose that $x = (x_1,\dots,x_n)$ and $y = (y_1,\dots, y_n)$ are two points in the same connected component of $M \setminus V$, and suppose that $\frac{\partial F}{\partial e_i^j} (x)$, $1 \leq i < j \leq n$, are linearly independent. Then, for all but finitely many values of $t \in [0,1]$, the partial derivatives $\frac{\partial F}{\partial e_i^j} ((1-t) x + ty)$, $1 \leq i < j \leq n$, are linearly independent. In particular, for all but finitely many values of $t \in [0,1]$, we have that $(1-t) x + ty \in G$.
\end{lem}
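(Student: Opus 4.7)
The plan is to exploit the fact that on the line segment from $x$ to $y$, each coordinate difference $x_i^k - x_j^k$ has constant sign, so the apparent non-smoothness in formula \eqref{eq:derivcomp} disappears and the relevant determinant becomes real-analytic in $t$.

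First I would use the fact (noted in the excerpt) that the connected components of $M \setminus V$ are convex and open. Hence the segment $z(t) = (1-t)x + ty$ lies in the same component as $x$ and $y$ for every $t \in [0,1]$, and moreover this remains true on a slightly larger open interval $I \supset [0,1]$ (by openness of the component). Writing
\[
z_i^k(t) - z_j^k(t) = (1-t)(x_i^k - x_j^k) + t(y_i^k - y_j^k),
\]
we see this is an affine function of $t$ which is nonzero on $I$, so its sign $\sgn(z_i^k(t) - z_j^k(t))$ is some constant $\sigma_{ijk} \in \{+1,-1\}$ independent of $t$.

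Next I would arrange the $\binom{n}{2}$ partial derivatives $\frac{\partial F}{\partial e_l^k}(z(t))$ (indexed by pairs $1 \le k < l \le n$) as the columns of a square matrix $N(t)$ of size $\binom{n}{2}$, using the basis $\{E_{ij}: 1 \le i < j \le n\}$ of $U$ for the rows. By \eqref{eq:derivcomp}, each entry of $N(t)$ is either identically zero or of the form
\[
\pm p\,|z_i^k(t) - z_j^k(t)|^{p-1}\sgn(z_i^k(t) - z_j^k(t)) \;=\; \pm p\,\sigma_{ijk}\bigl(\sigma_{ijk}(z_i^k(t) - z_j^k(t))\bigr)^{p-1},
\]
where the quantity being raised to the $(p-1)$st power is a \emph{positive} affine function of $t$ on $I$. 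Since $s \mapsto s^{p-1}$ is real-analytic on $(0,\infty)$, each entry of $N(t)$ is real-analytic in $t$ on $I$, and therefore $\det N(t)$ is real-analytic on $I$.

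Finally, by the hypothesis that the partial derivatives at $x$ are linearly independent, $\det N(0) \ne 0$. A real-analytic function on a connected open interval that is nonzero at some point has only isolated zeros, hence finitely many zeros in the compact set $[0,1] \subset I$. For every $t \in [0,1]$ with $\det N(t) \ne 0$, the columns of $N(t)$ are linearly independent, which gives $z(t) \in G$. The only substantive step is the analyticity claim, and the whole argument hinges on having constant signs so as to peel off the absolute values; this is exactly why the hypothesis that $x,y$ lie in the same component of $M \setminus V$ is used.
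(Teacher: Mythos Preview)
Your proof is correct and follows essentially the same route as the paper: form the $\binom{n}{2}\times\binom{n}{2}$ matrix of the partial derivatives along the segment, use constancy of the signs $\sgn(z_i^k(t)-z_j^k(t))$ on a connected component of $M\setminus V$ to rewrite each nonzero entry as a power of a positive affine function of $t$, conclude that the determinant is analytic and nonzero at $t=0$, and invoke the identity principle. The only cosmetic difference is that the paper extends $t$ to a complex neighbourhood of $[0,1]$ and appeals to the identity principle for holomorphic functions, whereas you extend to a real open interval $I\supset[0,1]$ (using openness of the component) and use isolated zeros of real-analytic functions; both are equally valid and your version is arguably slightly cleaner.
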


\begin{proof}
Define $J$ to be the set $\{(k,l) : 1 \leq k < l \leq n\}$. For $\sigma = (i,j) \in J$ we will write $e_\sigma = e_i^j$, and for $X \in U$ we will write $X_\sigma$ for the $(i,j)$-entry of $X$. By assumption, the $J \times J$ matrix given by $\left( \left( \dfrac{\partial F}{\partial e_\sigma}(x)\right)_\rho \right)$ has non-zero determinant. We now define a function $g:[0,1] \rightarrow \R$ by setting $$g(t) = \det \left( \left( \dfrac{\partial F}{\partial e_\sigma} ((1-t)x + ty) \right)_\rho \right) = \det(X(t)). $$Using \eqref{eq:derivcomp} and the fact that $x$ and $y$ are from the same component of $M \setminus V$, for each $\sigma,\rho \in J$, the matrix $X(t)$ has $(\sigma,\rho)$-entry $p(a_{\sigma,\rho} t + b_{\sigma,\rho})^{p-1} \epsilon_{\sigma,\rho}$ where $a_{\sigma,\rho}$ and $b_{\sigma,\rho}$ are non-zero constants with $a_{\sigma,\rho}t + b_{\sigma,\rho} > 0$ for all $t \in [0,1]$ and $\epsilon_{\sigma,\rho} \in \{-1,0,1\}$. 

By compactness there is an open connected subset $U$ of $\mathbb{C}$ containing $[0,1]$ such that the real part of $a_{\sigma,\rho} t + b_{\sigma,\rho}$ is positive for each $t \in U$. It follows that the function $g$ extends analytically to all of $U$, and therefore by the identity principle (and the fact that $g(0)$ is non-zero), $g$ has at most finitely many zeroes in $[0,1]$. 
\end{proof}

Consider the subset $R$ of $M$ defined by $$R = \{(x_1,\dots,x_n) \in M : x_i^i > x_j^i \text{ for each } 1 \leq i < j \leq n\}. $$ Note that for each component $C$ of $M \setminus V$ either $C \subset R$ or $C \cap R = \emptyset$. We next show that in order to prove that $\mu(C \setminus G) = 0$ for every component $C$ of $M \setminus V$, it is sufficient to consider components $C$ such that $C \subset R$.

Fix $(x_1,\dots,x_n) \in M \setminus V$. Define a permutation $\pi \in S_n$ recursively as follows: for $j=1,\dots,n$, let $\pi(j)$ be the unique $i \in \{1,\dots,n\} \setminus \{\pi(1), \dots, \pi(j-1)\}$ such that $$x_i^j > x_k^j \text{ for all } k \in \{1,\dots,n\} \setminus \{\pi(1),\dots,\pi(j-1), i\}. $$ It then follows that $x^j_{\pi(j)} > x^j_{\pi(k)}$ for all $1 \leq j < k \leq n$, and hence $(x_{\pi(1)},\dots,x_{\pi(n)}) \in R$. 

Define a map $A_\pi : M \rightarrow M$ by $A_\pi(y_1,\dots,y_n) = (y_{\pi(1)},\dots,y_{\pi(n)})$, and a map $B_\pi : U \rightarrow U$ by $B_\pi((X_{ij})_{1\leq i < j \leq n}) = (Y_{ij})_{1\leq i < j \leq n}$ where \begin{equation*}
Y_{ij} = \begin{cases}
	X_{\pi(i),\pi(j)} & \text{if }\pi(i) < \pi(j) , \\
	X_{\pi(j), \pi(i)} & \text{if } \pi(j) < \pi(i) .
\end{cases}
\end{equation*} 

We note that $B_\pi^{-1} F A_\pi = F$, and thus $B_\pi^{-1} DF|_{A_{\pi}(x)} A_\pi = DF|_x$, so to verify that $F$ has full rank at $x$, it is sufficient to verify that $F$ has full rank at $A_\pi(x)$, which lies in $R$. This completes the proof that it is sufficient to show that $\mu(C \setminus G) = 0$ whenever $C$ is a component of $M \setminus V$ with $C \subset R$.

Fix a component $C$ of $M \setminus V$ with $C \subset R$. If $\mu(C \setminus G) > 0$, then by Lebesgue's density theorem, there is a point $y \in C$ such that $\lim_{\epsilon \rightarrow 0} \frac{ \mu(B_{\epsilon)}(y) \cap (C \setminus G))}{\mu(B_\epsilon(y))} = 1$. For $i,j \in \{1,\dots,n\}$, define $$x_i^j = \begin{cases} 1 & \text{if } i=j, \\ 1 & \text{if } y_i^j > y_j^j, \\ 0 & \text{else.} \end{cases}$$ It is easy to verify that if $y_i^k < y_j^k$ then $x_i^k \leq x_j^k$, and thus $(1-t) x + ty \in C$ for all $t \in (0,1]$. Moreover, since $y \in R$, we have $x \in H$. It follows by Lemma \ref{lem:tri} that the partial derivatives $\frac{\partial F}{\partial e_i^j}(x)$, $1 \leq i < j \leq n$, are linearly independent. Hence there is an $\epsilon > 0$ such that at each $z \in B_\epsilon(x)$ the same holds, ie, $\frac{\partial F}{\partial e_i^j}(z)$, $1 \leq i < j \leq n$, are linearly independent. Choose  $t \in (0,1)$ such that $z = (1-t) x + ty \in B_\epsilon(x)$. Then $z \in B_\epsilon(x) \cap C$, so there is some $\delta > 0$ such that $B_\delta(z) \subset B_\epsilon \cap C$.

The Lebesgue density at $y$ is equal to $1$, so by making $\delta$ smaller, we may assume that $B_\delta(y) \subset C$ and $\mu(B_\delta(y) \setminus G) > 0$. By Lemma \ref{lem:line}, each line in the direction $y-x$ through a point in $B_\delta(z)$ intersects $B_{\delta}(y) \setminus G$ in at most finitely many points.  The lines in the direction $y-x$ through $B_\delta(z)$ can be parametrised by where they intersect the hyperplane through $z$ whose normal is $y-x$. This is a $(\binom{n}{2} - 1)$-dimensional hyperplane. The measure of $B_\delta(y) \setminus G$ can be given, by Fubini's theorem, as $$ \mu(B_\delta(y) \setminus G) = \int_{\R^{\binom{n}{2} - 1}} \int_{[a_s,b_s]} 1_{L(s) \cap B_\delta(y) \setminus G} d\mu^\prime ds $$where $L(s)$ is the line through the point $s$ in the previously mentioned hyperplane going through $s$, $[a_s,b_s]$ is the interval for which the line $L(s)$ intersects the sphere $B_\delta(y)$ and $\mu^\prime$ is 1-dimensional Lebesgue measure. This integral is equal to zero, as $L(s) \cap B_\delta(y) \setminus G$ is finite. This is a contradiction on $y$ being a point of Lebesgue density, and thus of $C \setminus G$ having non-zero measure. Thus $\mu(C \setminus G) = 0$ and the proof of Theorem \ref{thm:precise} is complete.
\end{proof}

\section{The Proof of Theorem \ref{thm:main}}\label{sec:brou}
Given a subset $M = \{m_1,\dots,m_n\}$ of $\N$ with $m_1 < m_2 < \dots < m_n$, if $x = (x_i)_{i=1}^\infty \in \ell_p$ or $x=(x_i)_{i=1}^N \in \ell_p^N$ with $N \geq m_n$, we define $P_M(x) = (x_{m_1},\dots,x_{m_n})$. If $N \in \N$, we write $P_N$ instead of $P_{\{1,\dots,N\}}$.

We say that an $n$-tuple $(x_1,\dots,x_n)$ in $\ell_p$ (or $\ell_p^N$) has \emph{Property K} if there is an $M \subset \N$ (or $M \subset \{1,\dots,N\}$ respectively) of size $n$ such that $(P_M x_1,\dots, P_M x_n) \in G_n$, where $G_n$ is the set defined in Theorem \ref{thm:precise}. Note that the set of $n$-tuples with property $K$ is open since the set $G_n$ is open.

We prove Theorem \ref{thm:main} by showing that the closed set of $n$-tuples without Property $K$ is Haar null (and thus nowhere dense), and that an $n$-tuple with Property $K$ embeds isometrically into a Banach space that satisfies the assumption of Theorem \ref{thm:main}. We will need three lemmas.

\begin{lem}\label{lem:ball}
Suppose that $x = (x_1,\dots,x_n)$ is an $n$-tuple in $\ell_p$ with Property $K$. Then there is some $N \in \N$, and vectors $y_1,\dots, y_n \in \ell_p^N$ such that $\|y_i-y_j\| = \|x_i-x_j\|$ and the $n$-tuple $(y_1,\dots,y_n)$ has Property $K$.
\end{lem}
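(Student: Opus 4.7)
The plan is to absorb the ``tail discrepancy'' into the first $n$ coordinates of a finite truncation of $(x_1,\dots,x_n)$ using the local $C^1$-inverse of $F_n$ provided by Theorem \ref{thm:precise}. After permuting the coordinates of $\ell_p$ (an isometry that preserves Property $K$), we may assume the subset witnessing Property $K$ is $M=\{1,\dots,n\}$. Write $z_i=P_n x_i\in\R^n$ so that $z:=(z_1,\dots,z_n)\in G_n$, and for each integer $K\ge 1$ set $u_i^{(K)}:=(x_i^{n+1},\dots,x_i^{n+K})\in\R^K$. We will define $y_i^{(K)}:=(w_i^{(K)},u_i^{(K)})\in\R^{n+K}$ for a well-chosen vector $w_i^{(K)}\in\R^n$, and then take $K$ large.

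The identity $\|y_i^{(K)}-y_j^{(K)}\|^p=\|w_i^{(K)}-w_j^{(K)}\|^p+\|u_i^{(K)}-u_j^{(K)}\|^p$, combined with the splitting $\|x_i-x_j\|^p=\|z_i-z_j\|^p+\sum_{k>n}|x_i^k-x_j^k|^p$, shows that the target $\|y_i^{(K)}-y_j^{(K)}\|=\|x_i-x_j\|$ for all $i,j$ is equivalent to $F_n(w^{(K)})=F_n(z)+\varepsilon^{(K)}$ in $U_n$, where $\varepsilon^{(K)}:=\bigl(\sum_{k>n+K}|x_i^k-x_j^k|^p\bigr)_{1\le i<j\le n}$. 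The entries of $\varepsilon^{(K)}$ are non-negative and tend uniformly to zero as $K\to\infty$. By Theorem \ref{thm:precise} there is an open neighborhood $B\subset U_n$ of $F_n(z)$ and a continuous map $\Phi:B\to M_n$ with $F_n\circ\Phi=\mathrm{Id}_B$ and $\Phi(F_n(z))=z$; for $K$ large enough, $F_n(z)+\varepsilon^{(K)}\in B$, so we define $w^{(K)}:=\Phi(F_n(z)+\varepsilon^{(K)})$. By construction $F_n(w^{(K)})=F_n(z)+\varepsilon^{(K)}$, and hence the pairwise $\ell_p$-distances of the resulting $y_i^{(K)}$ match those of the $x_i$ exactly.

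Finally, by continuity of $\Phi$ we have $w^{(K)}\to z\in G_n$ as $K\to\infty$, and since $G_n$ is open this forces $w^{(K)}\in G_n$ for all sufficiently large $K$. Taking $M':=\{1,\dots,n\}\subset\{1,\dots,n+K\}$ gives $(P_{M'}y_1^{(K)},\dots,P_{M'}y_n^{(K)})=w^{(K)}\in G_n$, so $(y_1^{(K)},\dots,y_n^{(K)})$ has Property $K$. Fixing any $K$ large enough to satisfy both $F_n(z)+\varepsilon^{(K)}\in B$ and $w^{(K)}\in G_n$, and setting $N=n+K$ and $y_i=y_i^{(K)}$, completes the proof. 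The only step requiring any conceptual insight is the decomposition $y_i=(w_i,u_i)$ into a ``Property-$K$-witnessing'' block $w_i$, in which a local inverse of $F_n$ is available, and a ``tail-truncation'' block $u_i$ that carries the remaining coordinates verbatim; once this is set up, each remaining step follows immediately from Theorem \ref{thm:precise} and the openness of $G_n$.
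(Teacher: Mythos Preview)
Your proof is correct and follows essentially the same route as the paper: after permuting so that $M=\{1,\dots,n\}$, you keep the coordinates $n+1,\dots,N$ of the $x_i$ intact and use the local right inverse $\Phi$ of $F_n$ from Theorem~\ref{thm:precise} to adjust the first $n$ coordinates so as to absorb the tail error $\varepsilon^{(K)}=\rho_{ij}$, with continuity of $\Phi$ and openness of $G_n$ guaranteeing Property~$K$ for large $K$. The only differences from the paper are notational (you write $N=n+K$ and $\varepsilon^{(K)}$ where the paper writes $N$ and $\rho_{ij}$).
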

\begin{rem}
This is the variant of Ball's result mentioned in the Introduction. Here $\|.\|$ denotes the $\ell_p$ norm.
\end{rem}
\begin{proof}[Proof of Lemma \ref{lem:ball}]
Let $M \subset \N$ be such that $|M| = n$ and $(P_M x_1, \dots, P_M x_n) \in G_n$. After an isometry (permuting the indices), we may assume without loss of generality that $M = \{1,\dots, n\}$. Then, since $(P_n x_1,\dots,P_n x_n) \in G_n$ and $G_n$ is open, there is some $\epsilon > 0$ such that if $z_i \in \ell_p^n$ and $\|z_i - P_n x_i\| < \epsilon$ then $(z_1,\dots,z_n) \in G_n$.

Since $(P_n x_1,\dots, P_n x_n) \in G_n$, by Theorem 3.2, there are open sets $A \ni (P_n x_1,\dots, P_n x_n)$, $B \ni F(P_n x_1,\dots, P_n x_n)$ and a $C^1$-map $\Phi:B \rightarrow A$ such that $F \circ \Phi = $ Id$_B$ and $\Phi(F(x)) = x$.

Fix $N \geq n$, and define $\rho_{ij} = \rho_{ij}(N)$ by $\|x_i - x_j\|^p = \|P_N x_i - P_N x_j\|^p + \rho_{ij}$. Since $\rho_{ij} \rightarrow 0$ as $N \rightarrow \infty$, there is an $N>n$ such that the element $Z = Z(N) = (\|P_n x_i - P_n x_j\|^p + \rho_{ij})_{1 \leq i < j \leq n}$ of $U$ is in the set $B$. Set $z = z(N) = (z_1,\dots,z_n) = \Phi(Z)$. By the continuity of $\Phi$ at the point $F(P_n x_1,\dots, P_n x_n)$, if $N$ is sufficiently large, then $\|z_i - P_n x_i\| < \epsilon$, and hence $(z_1,\dots, z_n) \in G_n$.

We now define the points $y_1,\dots, y_n \in \ell_p^N$ by:
\begin{itemize}
	\item $P_n y_i = z_i$
	\item $(P_N - P_n) y_i = (P_N - P_n) x_i$.
\end{itemize}
We now verify that $(y_1,\dots,y_n)$ has Property $K$, and that $\|y_i - y_j\| = \|x_i - x_j\|$. The first of these is clear, $(P_n y_1,\dots,P_n y_n)$ is in $G_n$ by construction. 

To verify that $\|y_i - y_j\| = \|x_i - x_j\|$, note that $$\|y_i - y_j\|^p = \|P_n y_i - P_n y_j\|^p + \|(P_N - P_n) y_i - (P_N - P_n) y_j\|^p,$$which is equal to$$\|z_i - z_j\|^p + \|(P_N - P_n) x_i - (P_N - P_n) x_j\|^p.$$ By the definition of $(z_1,\dots,z_n)$, we see that $\|z_i - z_j\|^p = \|P_n x_i - P_n x_j\|^p + \rho_{ij}$. By the definition of $\rho_{ij}$, we thus get that $\|y_i-y_j\|^p = \|x_i-x_j\|^p$.
\end{proof}

We have now shown that if a subset of $\ell_p$ has Property $K$, then it is isometric to a subset of $\ell_p^N$ with Property $K$. We next show a slight variant of Theorem \ref{thm:precise}.

\begin{lem} \label{lem:perturb}
Suppose $x = (x_1,\dots,x_n)$ is an $n$-tuple in $\ell_p^N$, $N \geq n$, with Property $K$. Then there is some $\epsilon > 0$ such that any $\epsilon$-perturbation of $X$ can be embedded into $\ell_p^N$ with the embedding depending continuously on the perturbation.
\end{lem}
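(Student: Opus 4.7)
The plan is to reduce to Theorem \ref{thm:precise} by splitting $\ell_p^N$ into $\ell_p^n \oplus_p \ell_p^{N-n}$ along the coordinate set $M$ witnessing Property $K$: the first block will absorb the perturbation via the local inverse of $F_n$, while the second block will be frozen to match $x$.

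First I would permute the coordinates of $\ell_p^N$ (an isometry) so that $M = \{1, \ldots, n\}$, which places $\tilde x := (P_n x_1, \ldots, P_n x_n)$ inside $G_n$. Theorem \ref{thm:precise} then supplies open neighbourhoods $A \ni \tilde x$ in $M_n$ and $B \ni F_n(\tilde x)$ in $U_n$, together with a $C^1$ right inverse $\Phi : B \to A$ of $F_n$. I would record the tail contribution to $p$-th power distances as $\rho_{ij} := \|x_i - x_j\|^p - \|P_n x_i - P_n x_j\|^p \geq 0$; these are constants determined by $x$.

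Given an $\epsilon$-perturbation of $\{x_1, \ldots, x_n\}$ with distance data $(d'_{ij})$, I would form $D := \bigl((d'_{ij})^p - \rho_{ij}\bigr)_{1 \leq i < j \leq n} \in U_n$. By continuity of $t \mapsto t^p$ and openness of $B$, a sufficiently small $\epsilon > 0$ (depending only on $x$) guarantees $D \in B$ for every such perturbation. I would then set $(z_1, \ldots, z_n) := \Phi(D)$ and define $y_i \in \ell_p^N$ by $P_n y_i = z_i$ and $(P_N - P_n) y_i = (P_N - P_n) x_i$. The direct computation
\[
\|y_i - y_j\|^p \;=\; \|z_i - z_j\|^p + \|(P_N - P_n)(x_i - x_j)\|^p \;=\; D_{ij} + \rho_{ij} \;=\; (d'_{ij})^p
\]
delivers the embedding, and continuity of the map $(d'_{ij}) \mapsto (y_1, \ldots, y_n)$ is inherited directly from continuity of $\Phi$, since forming $D$ from $(d'_{ij})$ and attaching the fixed tails $(P_N - P_n) x_i$ are both continuous operations.

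I do not anticipate a serious obstacle: the construction is essentially the one used in Lemma \ref{lem:ball}, with the $\rho_{ij}$ playing the same role, but solving $F_n(z) = D$ for the perturbed target $D$ rather than for the recovered original $p$-th power distances. The only point requiring care is selecting $\epsilon$ small enough --- uniformly in the perturbation --- so that $D$ always lies in the Submersion-Theorem neighbourhood $B$, which is immediate from the openness of $B$ and the continuity of the power map.
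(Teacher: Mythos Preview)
Your proposal is correct and follows essentially the same route as the paper: reduce to $M=\{1,\dots,n\}$, freeze the tail $(P_N-P_n)x_i$, and use the local right inverse $\Phi$ from Theorem \ref{thm:precise} on the first $n$ coordinates to absorb the perturbation; the verification $\|y_i-y_j\|^p = D_{ij} + \rho_{ij} = (d'_{ij})^p$ is exactly the paper's computation in slightly different notation. The only cosmetic difference is that the paper parametrises the perturbation by $Y_{ij}-\|x_i-x_j\|$ and then passes to $p$-th powers, whereas you go directly to $(d'_{ij})^p$ and subtract the fixed tail contribution --- the resulting target for $\Phi$ is the same matrix.
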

At the beginning of the proof of Lemma \ref{lem:perturb} we will make it clear what continuous dependence on the perturbation means in a way similar to the precise statement of Theorem \ref{thm:precise}.

\begin{proof}
Define $\tilde{F} : \underbrace{\R^N \times \dots \times \R^N}_{n \text{ times}} \rightarrow U_n$ by $$\tilde{F}(y_1,\dots,y_n) = \left( \|y_i-y_j\| \right)_{1 \leq i < j \leq n},$$where we note that there is no $p$\textsuperscript{th} power of the norm. Our goal is to show that there is an open subset $\tilde{B}$ of $U_n$ and a continuous map $\Psi : \tilde{B} \rightarrow \underbrace{\R^N \times \dots \times \R^N}_{n \text{ times}}$ such that:
\begin{itemize}
	\item $\tilde{F}(x) \in \tilde{B}$
	\item $\Psi(\tilde{F}(x)) = x$
	\item $\tilde{F} \circ \Psi = $ Id$_{\tilde{B}}$.
\end{itemize}

Let $M \subset \{1,\dots,N\}$ be such that $|M| = n$ and $(P_M x_1,\dots, P_M x_n) \in G_n$. Again, without loss of generality, we may assume that $M = \{1,\dots,n\}$.

By Theorem \ref{thm:precise}, there exist open sets $A \ni (P_n x_1,\dots, P_n x_n)$, $B \ni F(P_n x_1,\dots, P_n x_n)$ and a $C^1$-map $\Phi: B \rightarrow A$ such that $\Phi(F(P_n x_1,\dots, P_n x_n)) = (P_n x_1,\dots,P_n x_n)$ and $F \circ \Phi = $ Id$_B$. Fix $\epsilon > 0$ such that if $Y = (Y_{ij})_{1 \leq i < j \leq n}$ is such that $|Y_{ij} - \|x_i-x_j\|^p | < \epsilon$, then $Y \in B$.

Choose $\delta = \delta(\epsilon) > 0$ to be specified later. We set $\tilde{B} = \{Y \in U_n : |Y_{ij} - \|x_i - x_j\| | < \delta$ for all pairs $i,j\}$.

Fix $Y = (Y_{ij})_{1\leq i < j \leq n} \in \tilde{B}$. We define $\Psi(Y)$ similarly to the definition of the points $(y_1,\dots,y_n)$ in the proof of Lemma \ref{lem:ball}. Define $\rho_{ij} = Y_{ij} - \|x_i - x_j\|$ and $\epsilon_{ij} = \epsilon_{ij}(Y)$ by $(\|x_i-x_j\| + \rho_{ij})^p = \|x_i - x_j\|^p + \epsilon_{ij}$. If $|\rho_{ij}|$ is sufficiently small (ie, our choice of $\delta$ is sufficiently small), then $(\|P_n x_i - P_n x_j\|^p + \epsilon_{ij})_{1 \leq i < j \leq n}$ is in $B$. Define $z_i = \Phi((\|P_n x_i - P_n x_j\|^p + \epsilon_{ij})_{1 \leq i < j \leq n})$. We then set $\Psi(Y)$ to be the $n$-tuple $(y_1,\dots,y_n)$ where:
\begin{itemize}
	\item $P_n y_i = z_i$
	\item $(P_N - P_n) y_i = x_i$.
\end{itemize}

We verify that $\|y_i - y_j\| = \|x_i - x_j\| + \rho_{ij} = Y_{ij}$, ie that $\tilde{F}(\Psi(Y)) = Y$, as this is the only one of the three properties listed above that is non-trivial.

Indeed, $$\|y_i-y_j\|^p = \|P_n y_i - P_n y_j\|^p + \|(P_N - P_n) y_i - (P_N - P_n)y_j\|^p,$$which (by the definition of $y_i$) equals $$\|z_i-z_j\|^p + \|(P_N - P_n) x_i - (P_N - P_n)x_j\|^p$$and this is equal (by the definition of $z_i$) to $$\|x_i - x_j\|^p + \epsilon_{ij}.$$By the definition of $\epsilon_{ij}$, this is equal to $(\|x_i-x_j\| + \rho_{ij})^p$, which is as required.
\end{proof}

Our next lemma shows that if we have an $n$-point subset of $\ell_p^N$ with Property $K$, then it embeds isometrically into any Banach space satisfying the assumption of Theorem \ref{thm:main}. This result is, in some sense, dual to Theorem \ref{thm:perturb}. Where Theorem \ref{thm:perturb} says \emph{small perturbations of the metric space embed into the Banach space}, this is saying that \emph{the metric space embeds into small perturbations of the Banach space}.

\begin{lem}\label{lem:final}
Suppose $x=(x_1,\dots,x_n)$ is an $n$-tuple in $\ell_p^N$, $N \geq n$, with Property $K$. Then there is some $\delta > 0$ such that if $d(E,\ell_p^N) < 1+\delta$ then $\{x_1,\dots,x_n\}$ with the metric inherited from $\ell_p^N$ embeds isometrically into $E$.
\end{lem}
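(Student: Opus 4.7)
The plan is to transport the $x_i$ into $E$ via an isomorphism $T:\ell_p^N\to E$, observe that the induced distances in $E$ are a small perturbation of the target distances $\|x_i-x_j\|_p$, and then invoke Lemma~\ref{lem:perturb} together with a Brouwer fixed-point argument to correct this perturbation into an exact isometric copy. After rescaling I may assume $\|T\|\le 1$ and $\|T^{-1}\|\le 1+\delta$, which gives the one-sided distortion estimate $(1+\delta)^{-1}\|v\|_p\le \|Tv\|_E\le \|v\|_p$ for every $v\in\ell_p^N$. Write $W=(\|x_i-x_j\|_p)_{i<j}\in U_n$ for the target distance matrix.

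The first step is to apply Lemma~\ref{lem:perturb} to the $n$-tuple $x$, obtaining an open neighborhood $\tilde B\subset U_n$ of $W$ and a continuous map $\Psi:\tilde B\to(\ell_p^N)^n$ satisfying $\Psi(W)=x$ and $\tilde F\circ\Psi=\mathrm{Id}_{\tilde B}$. I would then define the continuous composition $\Theta:\tilde B\to U_n$ by $\Theta(Y)_{ij}:=\|T\Psi_i(Y)-T\Psi_j(Y)\|_E$. The task reduces to finding $Y^*\in\tilde B$ with $\Theta(Y^*)=W$: for such a $Y^*$ the points $e_i:=T\Psi_i(Y^*)\in E$ satisfy $\|e_i-e_j\|_E=W_{ij}=\|x_i-x_j\|_p$, yielding the required isometric embedding.

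Since $\|\Psi_i(Y)-\Psi_j(Y)\|_p=Y_{ij}$ by construction, the distortion estimate for $T$ gives $(1+\delta)^{-1}Y_{ij}\le\Theta(Y)_{ij}\le Y_{ij}$. Thus the continuous map $\Gamma(Y):=Y+W-\Theta(Y)$ satisfies $W_{ij}\le \Gamma(Y)_{ij}\le W_{ij}+\tfrac{\delta}{1+\delta}Y_{ij}$, and its fixed points are precisely the solutions of $\Theta(Y)=W$. Working with the compact convex box $K_r:=\{Y\in U_n:W_{ij}\le Y_{ij}\le (1+r)W_{ij}\text{ for all }i<j\}$, a direct calculation shows $\Gamma(K_r)\subset K_r$ whenever $\delta\le r$. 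Choosing $\delta$ small enough that in addition $K_\delta\subset\tilde B$, Brouwer's fixed point theorem applied to the continuous self-map $\Gamma:K_\delta\to K_\delta$ furnishes the desired $Y^*$.

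The delicate step is arranging for $\Gamma$ to be a self-map of a compact convex set while staying inside the domain $\tilde B$ of $\Psi$; the one-sided estimate $\Theta(Y)\le Y$, which is a direct consequence of the normalization $\|T\|\le 1$, is what dictates the asymmetric box $K_r$ sitting on one side of $W$ rather than a symmetric ball around it. Once this geometry is in place, the Brouwer step is routine and shrinking $\delta$ absorbs any remaining constants, so no further subtlety is needed.
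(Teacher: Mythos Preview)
Your argument is correct and is essentially the paper's own proof. The paper identifies $E$ with $(\R^N,\|\cdot\|_E)$ satisfying $\|y\|_E\le\|y\|\le(1+\delta)\|y\|_E$ (which is your normalization $\|T\|\le 1$, $\|T^{-1}\|\le 1+\delta$ with $T$ the identity), and defines $\varphi(\rho)_{ij}=\|x_i-x_j\|+\rho_{ij}-\|\Psi(Z(\rho))_i-\Psi(Z(\rho))_j\|_E$ on the cube $[0,\epsilon]^{\binom{n}{2}}$; under the change of variables $Y=W+\rho$ this is exactly your $\Gamma(Y)-W$, and the one-sided box $[0,\epsilon]^{\binom{n}{2}}$ plays the same role as your $K_\delta$, with Brouwer finishing in the same way.
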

\begin{proof}
Let $\tilde{F}, \tilde{B}$ and $\Psi$ be as in the proof of Lemma \ref{lem:perturb}. Choose $\epsilon > 0$ such that if $Y = (Y_{ij})_{1 \leq i < j \leq n} \in U$, $|Y_{ij} - \|x_i-x_j\| | < \epsilon$, then $Y \in \tilde{B}$. Fix some $\delta > 0$ and let $E$ be an $N$-dimensional Banach space such that $d(E,\ell_p^N) < 1+\delta$. We will find the value of $\delta$ later, and it will be expressed in terms of $x$ and $\epsilon$ only. We may assume that $E = (\R^N,\|.\|_E)$ and that the norm on $E$ satisfies $\|y\|_E \leq \|y\| \leq (1+\delta) \|y\|_E$, where as usual $\|.\|$ denotes the $\ell_p$ norm.

Let $\rho = (\rho_{ij})_{1 \leq i < j \leq n}$ be an element of the space $[0,\epsilon]^{\binom{n}{2}}$. We define a metric space $Z(\rho)$ as follows:
\begin{itemize}
	\item $Z(\rho)$ is a metric space on $n$ distinct points $z_1,\dots,z_n$.
	\item $d(z_i,z_j) = \|x_i-x_j\| + \rho_{ij}$.
\end{itemize}
By the choice of $\epsilon$, and since $\tilde{F} \circ \Psi = $ Id$_{\tilde{B}}$, it follows that $Z(\rho)$ is a metric space isometric to a subset of $\ell_p^N$. Through slight abuse of notation, in what follows we identify $Z(\rho)$ with its distance matrix, ie, $Z(\rho) = (d(z_i,z_j))_{1 \leq i < j \leq n}$.

Now define $\varphi:[0,\epsilon]^{\binom{n}{2}} \rightarrow [0,\epsilon]^{\binom{n}{2}}$ by $$\varphi(\rho) = \left( \|x_i-x_j\|+ \rho_{ij} - \|\Psi(Z(\rho))_i - \Psi(Z(\rho))_j \|_E \right)_{1 \leq i < j \leq n}.$$ We claim that if $\delta$ is sufficiently small then $\varphi$ is well defined. To see that $\varphi(\rho)_{ij} > 0$, note that $\varphi(\rho)_{ij} \geq \|x_i-x_j\| + \rho_{ij} - \|\Psi(Z(\rho))_i - \Psi(Z(\rho))_j\| = 0$, where we have used that $\|y\| \geq \|y\|_E$ for all $y \in \R^N$.

On the other hand, $\varphi(\rho)_{ij} \leq \|x_i-x_j\| + \rho_{ij} - \frac{1}{1+\delta} \|\Psi(Z(\rho))_i - \Psi(Z(\rho))_j\| = \frac{\delta}{\delta+1} ( \|x_i-x_j\| + \rho_{ij} )$, where we have used that $\|y\| \leq (1+\delta) \|y\|_E$ for all $y \in \R^N$. So if $\delta$ is sufficiently small, then this is less than $\epsilon$.

Since $\varphi$ is a continuous map from a compact convex subset of $\R^{\binom{n}{2}}$ to itself, it follows from Brouwer's fixed point theorem that $\varphi$ has a fixed point $\rho$. Letting $(y_1,\dots,y_n) = \Psi(Z(\rho))$, the map sending $x_i$ to $y_i$ is an isometric embedding of $\{x_1,\dots,x_n\}$ into $E$.
\end{proof}

\begin{rem}
Suppose we had $x_1,\dots,x_n \in \ell_p$ such that the map $\tilde{F}:\underbrace{\ell_p\times \dots \times \ell_p}_{n \text{ times}} \rightarrow U$, $\tilde{F}(y_1,\dots,y_n) = (\|y_i-y_j\|)_{1 \leq i < j \leq n}$, had a continuous right inverse at $\tilde{F}(x_1,\dots,x_n)$. Then an identical argument to the proof of Lemma \ref{lem:final} would show that there is some $\delta > 0$ such that if $d(Y,\ell_p) < 1+\delta$, then $Y$ contains an isometric copy of $\{x_1,\dots,x_n\}$. Since the assumption in Theorem \ref{thm:main} is \emph{weaker} than the Banach space containing an isomorphic copy of $\ell_p$, we had to choose a more technical version of Property $K$ than simply "$\tilde{F}$ has a continuous right inverse at $(x_1,\dots,x_n)$." This stronger assumption also motivated Lemma \ref{lem:ball}.
\end{rem}

We now give the proof of Theorem \ref{thm:main}:

\begin{proof}[Proof of Theorem \ref{thm:main}]
By a combination of Lemmas \ref{lem:ball}, \ref{lem:perturb} and \ref{lem:final}, we see that if an $n$-tuple $(x_1,\dots,x_n)$ in $\ell_p$ has Property $K$, then there is some $N \in \N$ and $\delta > 0$ such that if $Y$ is a Banach space with $d(Y,\ell_p^N) < 1+\delta$, then $\{x_1,\dots,x_n\}$ with the metric inherited from $\ell_p^N$ embeds isometrically into $Y$. By Krivine's Theorem, Theorem \ref{thm:krivine}, any Banach space $X$ satisfying the assumption of the theorem (ie, containing the spaces $\ell_p^n$, $n \in \N$, uniformly), contains a subspace $Y$ with $d(Y,\ell_p^N) < 1+\delta$. Thus $\{x_1,\dots,x_n\}$ with the metric inherited from $\ell_p^N$ embeds isometrically into $X$.

To conclude, we just need to show that the set $A$ of all $n$-tuples that do not have Property $K$ is Haar null. Indeed, the intersection of $A$ with the finite-dimensional space $\ell_p^n \times \dots \times \ell_p^n$ is contained in the complement of $G_n$, which by Theorem \ref{thm:precise} has measure zero. Note also that $A$ is translation-invariant. Thus, by the characterization of Haar null sets stated in Section \ref{sec:ban}, $A$ is Haar null. Since $A$ is closed, it follows that $A$ is nowhere dense.
\end{proof}

\section{Further Remarks and Open Problems}
In this section we give some remarks on the special cases of $\ell_2$, $\ell_\infty$ and $\ell_1$, and pose some open problems.

In the case $\ell_2$, we deduce Theorem \ref{thm:l2} from our results.
\begin{thm}
Every finite affinely independent subset of $\ell_2$ isometrically embeds into every infinite-dimensional Banach space $X$.
\end{thm}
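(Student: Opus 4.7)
The plan is to combine Theorem \ref{thm:main} with a direct verification that affinely independent $n$-tuples in $\ell_2$ have Property $K$. By Dvoretzky's theorem every infinite-dimensional Banach space $X$ uniformly contains the spaces $\ell_2^n$, so the hypothesis of Theorem \ref{thm:main} holds; the proof of that theorem (Lemmas \ref{lem:ball}, \ref{lem:perturb}, and \ref{lem:final}) shows that any $n$-tuple in $\ell_2$ with Property $K$ embeds isometrically into $X$. It therefore suffices to establish Property $K$ for affinely independent tuples.

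The first step is to reduce to $\ell_2^n$. Since $\{x_1,\dots,x_n\}$ spans an $(n-1)$-dimensional affine subspace of $\ell_2$, there is an isometric copy $(y_1,\dots,y_n)$ with each $y_i \in \R^n = \ell_2^n \subset \ell_2$ and $y_1, \dots, y_n$ affinely independent in $\R^n$. Taking $M = \{1,\dots,n\}$, Property $K$ reduces to showing $(y_1,\dots,y_n) \in G_n$ for $p = 2$, i.e.\ that $DF|_y$ has full rank $\binom{n}{2}$.

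This is the main step. Specializing \eqref{eq:derivcomp} to $p = 2$, the condition $(v_1,\dots,v_n) \in \ker DF|_y$ becomes $\langle y_i - y_j, v_i - v_j\rangle = 0$ for all $1 \leq i < j \leq n$. Substituting $w_i = v_i - v_1$ (so $w_1 = 0$), this is equivalent to $\langle y_i - y_1, w_i\rangle = 0$ for $i \geq 2$ together with $\langle y_i - y_1, w_j\rangle + \langle y_j - y_1, w_i\rangle = 0$ for distinct $i, j \geq 2$; equivalently, the matrix $A_{ij} = \langle y_i - y_1, w_j\rangle$ is antisymmetric. Since $y_2 - y_1, \dots, y_n - y_1$ are linearly independent in $\R^n$, the linear map $w \mapsto (\langle y_i - y_1, w\rangle)_{i \geq 2}$ from $\R^n$ to $\R^{n-1}$ is surjective with $1$-dimensional kernel, so a short dimension count yields $\dim \ker DF|_y = \binom{n-1}{2} + (n-1) + n = \binom{n+1}{2}$, where the three terms parametrise respectively the antisymmetric matrix $A$, the redundancy in the choice of each $w_j$, and the free choice of $v_1$. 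Hence $DF|_y$ has maximal rank $n^2 - \binom{n+1}{2} = \binom{n}{2}$, so $y \in G_n$. Geometrically, $\ker DF|_y$ is exactly the tangent space to the orbit of $y$ under the rigid-motion group of $\R^n$, which has full dimension $\binom{n+1}{2}$ precisely because affine independence makes the stabiliser of $y$ trivial.

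The main obstacle is this rank calculation: one must identify $\ker DF|_y$ with the infinitesimal rigid motions, and this uses affine independence in an essential way. Once the algebraic fact is in hand, the conclusion follows immediately by combining Dvoretzky's theorem with the machinery of Section \ref{sec:brou}; Lemma \ref{lem:ball} is applied only trivially since our tuple already sits in $\ell_2^n$.
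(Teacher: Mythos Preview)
Your proof is correct, but your route to establishing $y\in G_n$ is genuinely different from the paper's. The paper first reduces affinely independent to linearly independent (by translation), then applies Gram--Schmidt to produce an isometry $\Theta$ sending $x_i$ into $\mathrm{span}\{e_1,\dots,e_i\}$, and finally invokes (a minor variant of) Lemma~\ref{lem:tri} to read off linear independence of the relevant partial derivatives from this triangular shape. In other words, the paper reuses the combinatorial machinery already built in Section~\ref{sec:perturb} for general $p$. You instead exploit the inner-product structure available only at $p=2$: you identify $\ker DF|_y$ directly as $\{(v_1,\dots,v_n):\langle y_i-y_j,v_i-v_j\rangle=0\}$, recognise this as the tangent space to the orbit of $y$ under the rigid-motion group of $\R^n$, and conclude by a dimension count using affine independence to see that the stabiliser is trivial. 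Your argument is more conceptual and explains \emph{why} the rank condition holds geometrically, at the cost of being specific to $p=2$; the paper's argument is more mechanical but fits into the general framework of the article. One minor wording point: you say it suffices to establish Property~$K$ ``for affinely independent tuples'', but what you actually do (and all you need) is establish it for some isometric copy in $\ell_2^n\subset\ell_2$ of the given tuple; this is harmless since the conclusion concerns only the metric space.
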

\begin{proof}
First note that every affinely independent set has a linearly independent translate, so without loss of generality, we may reduce to the case of linearly independent sets. Let $e_1,e_2,\dots$ be an orthonormal basis of $\ell_2$. If $\{x_1,\dots, x_n\}$ is a linearly independent subset of $\ell_2$, then there is some isometry $\Theta$ such that $\Theta(x_1) \in $ span$\{e_1\}$, $\Theta(x_2) \in $ span$\{e_1,e_2\}$, etc. Such a $\Theta0$ is constructed by induction and the Gram-Schmidt process applied to the vectors $\{x_1,\dots,x_n\}$. Then a minor variant of Lemma \ref{lem:tri} (in which the coefficient of $e_i$ in $x_i$ is non-zero, but not necessarily one) shows that the $n$-tuple $(x_1,\dots,x_n)$ belongs to $G_n$. Thus $(\Theta x_1,\dots, \Theta x_n)$ (which is isometric to $(x_1,\dots,x_n)$) has Property $K$.

Applying Lemma \ref{lem:final} to $(\Theta x_1,\dots, \Theta x_n)$ we see that there exists some $\delta > 0$ such that whenever $E$ is an $n$-dimensional Banach space with $d(E,\ell_2^n) < 1+\delta$ then $(\Theta x_1,\dots, \Theta x_n)$ embeds isometrically into $E$. By Dvoretzky's theorem, if $X$ is infinite-dimensional, there is a subspace $Z$ of $X$ such that $d(Z,\ell_2^n) < 1+\delta$, and thus $Z$ contains an isometric copy of $(\Theta x_1,\dots, \Theta x_n)$ (which is isometric to $(x_1,\dots,x_n)$).
\end{proof}

In the case of $\ell_\infty$, the proof of Theorem \ref{thm:linf} (given as Theorem 4.3 in \cite{me}) essentially proceeds by directly showing that if $(x_1,\dots,x_n)$ is a concave metric space, then the mapping $\tilde{F}$ is locally open at $(x_1,\dots,x_n)$. This argument does not use differentiation: the norm on $\ell_\infty$ is easy to compute.

In the case of $\ell_1$, the majority of the proofs in this paper simply do not work. In the case $p=1$ the computation of the derivative (Equation \eqref{eq:derivcomp}) yields $\frac{\partial F}{\partial e_l^k} = ($sgn$(x_i^k - x_j^k)_{1 \leq i < j \leq n})$. Thus the function is locally open if the collection forms linearly independent matrices. This is, however, not the case on a large set as it is for the case $1 < p < \infty$. However, if it is true at a point $x = (x_1,\dots,x_n)$ the rest of the proofs presented here work identically.

We now list some open problems. The case $p=2$ was originally raised by Ostrovskii in \cite{ostrov2}, who asked:
\begin{quest}
Let $X$ be an infinite-dimensional Banach space and $A$ a finite subset of $\ell_2$. Then does $A$ isometrically embed into $X$?
\end{quest}

The general question of Ostrovskii, given in \cite{ostrov}, still remains open:
\begin{quest}
Let $X$ be an infinite-dimensional Banach space containing $\ell_p$ isomorphically. Then does every finite subset of $\ell_p$ embed isometrically into $X$?
\end{quest}

The way we approached this question leads to the following natural variant:

\begin{quest}
Let $X$ be an infinite-dimensional Banach space that uniformly contains $\ell_p^n$, $n \in \N$. Then does every finite subset of $\ell_p$ embed isometrically into $X$?
\end{quest}

As detailed in the introduction, there can be no positive results in the cases $p=1$ and $p=\infty$. However, the known partial answers lead to the following open question:
\begin{quest}
Let $p=1$ or $p=\infty$. Which $n$-point subsets of $\ell_p$ embed isometrically into any Banach space $X$ that uniformly contains the spaces $\ell_p^n$, $n \in \N$?
\end{quest}

\section*{Acknowledgements}
I would like to thank my PhD supervisor Andras Zsak for some helpful comments and suggestions. I would like to thank Mikhail Ostrovskii for posing this problem to me, and would like to thank Jack Smith for a helpful conversation.

\end{document}